\numberwithin{equation}{section}
\theoremstyle{plain}
\newtheorem{theorem}{Theorem}
\newtheorem{proposition}[theorem]{Proposition}
\newtheorem{remark}[theorem]{Remark}
\begin{document}

\title[Asymptotic Structure of Constrained ERGM]{Asymptotic Structure of Constrained Exponential Random Graph Models}

\author{Lingjiong Zhu}
\address
{Department of Mathematics\newline
\indent Florida State University\newline
\indent 1017 Academic Way\newline
\indent Tallahassee, FL-32306\newline
\indent United States of America}
\email{
zhu@math.fsu.edu}

\date{17 January 2017.}

\subjclass[2010]{05C80,82B26,05C35}  
\keywords{dense random graphs, exponential random graphs, graphs limits, phase transitions.}

\begin{abstract}
In this paper, we study exponential random graph models subject to certain constraints.
We obtain some general results about the asymptotic structure of the model.
We show that there exists non-trivial regions in the phase plane
where the asymptotic structure is uniform and there also exists non-trivial regions
in the phase plane where the asymptotic structure is non-uniform. We will get more refined
results for the star model and in particular the two-star model for which a sharp transition
from uniform to non-uniform structure, a stationary point and phase transitions will be obtained.
\end{abstract}

\maketitle

\section{Introduction}

Probabilistic ensembles with one or more adjustable parameters
are often used to model complex networks, see 
e.g. Fienberg \cite{FienbergI,FienbergII}, 
Lov\'{a}sz \cite{Lovasz2009}
and Newman \cite{Newman}. 
One of the standard complex network models used very often in social networks, biological networks, the Internet etc.
is the exponential random graph model, originally
studied by Besag \cite{Besag}. 
We refer to Snijders et al. \cite{Snijders}, Rinaldo et al. \cite{Rinaldo} and
Wasserman and Faust \cite{Wasserman} for history and 
a review of recent developments.

Recently, exponential random graph models and its variations have got a lot of attentions
in the literature. The emphasis has been made on the limiting free energy and entropy, phase transitions
and asymptotic structures, see e.g. Chatterjee and Diaconis \cite{ChatterjeeDiaconis}, 
Radin and Yin \cite{Radin}, Radin and Sadun \cite{RadinII}, 
Radin et al. \cite{RadinIII}, Radin and Sadun \cite{RadinIV}, Kenyon et al. \cite{Kenyon},
Yin \cite{Yin}, Yin et al. \cite{YinII}, Aristoff and Zhu \cite{AristoffZhu},
Aristoff and Zhu \cite{AristoffZhuII}.
In this paper, we are interested to study the constrained exponential random graph models
introduced in Kenyon and Yin \cite{KenyonYin}. The directed case was first studied in Aristoff and Zhu \cite{AristoffZhuII}.

Let us first introduce the exponential random graph model. 
Let $\mathcal{G}_{n}$
be the set of all simple (i.e., undirected, without loops or multiple edges) graphs $G_{n}$
on $n$ vertices. For each $G_{n}\in\mathcal{G}_{n}$, define the probability measure
\begin{equation}
\mathbb{P}_{n}(G_{n})=\exp\left\{n^{2}\left(\beta_{1}t(H_{1},G_{n})+\cdots
+\beta_{k}t(H_{k},G_{n})-\psi_{n}(\beta_{1},\ldots,\beta_{k})\right)\right\},
\end{equation}
where $(\beta_{1},\ldots,\beta_{k})$ are parameters, $H_{1},\ldots,H_{k}$ are given finite simple graphs,
$t(H_{j},G_{n})$, $1\leq j\leq k$ are the densities of graph homomorphisms defined as
\begin{equation}
t(H_{j},G_{n})=\frac{|\text{hom}(H_{j},G_{n})|}{|V(G_{n})|^{|V(H_{j})|}},
\qquad 
j=1,2,\ldots,k,
\end{equation}
and $\psi_{n}(\beta_{1},\ldots,\beta_{k})$ is the normalizing constant
\begin{equation}
\psi_{n}(\beta_{1},\ldots,\beta_{k})=
\frac{1}{n^{2}}\log\sum_{G_{n}\in\mathcal{G}_{n}}\exp
\left\{n^{2}(\beta_{1}t(H_{1},G_{n})+\cdots+\beta_{k}t(H_{k},G_{n}))\right\}.
\end{equation}

Consider a simple graph $H$ with number of vertices denoted by $v(H)$ and number of edges 
denoted by $e(H)$. The set of vertices and the set of edges are denoted by
$V(H)$ and $E(H)$ respectively. Let $V(H)=\{1,2,\ldots,k\}$. We also define
\begin{equation}
t(H,h)=\int_{[0,1]^{k}}\prod_{\{i,j\}\in E(H)}h(x_{i},x_{j})dx_{1}\cdots dx_{k},
\end{equation}
where $h:[0,1]^{2}\rightarrow[0,1]$
with $h(x,y)=h(y,x)$ for any $0\leq x,y\leq y\leq 1$ is known as a \emph{graphon}.

For a more detailed introduction and background about the exponential random graph model, 
we refer to Section 2 of Kenyon and Yin \cite{KenyonYin}.

Then, using the large deviation theory for random graphs developed in Chatterjee and Varadhan \cite{ChatterjeeVaradhan},
the limiting free energy for the exponential random graph models was obtained in Chatterjee and Diaconis \cite{ChatterjeeDiaconis}.

\begin{theorem}[Chatterjee and Diaconis \cite{ChatterjeeDiaconis}]
\begin{align}
&\lim_{n\rightarrow\infty}\psi_{n}(\beta_{1},\ldots,\beta_{k})
\\
&=\sup_{h:[0,1]^{2}\rightarrow[0,1],h(x,y)=h(y,x)}\left\{\sum_{i=1}^{k}\beta_{i}t(H_{i},h)
-\frac{1}{2}\iint_{[0,1]^{2}}I(h(x,y))dxdy\right\},
\nonumber
\end{align}
where $I(x):=x\log x+(1-x)\log(1-x)$. In particular, if $H_{1}$ denotes a single edge
and $\beta_{2},\ldots,\beta_{k}\geq 0$,
\begin{equation}\label{notDistinguish}
\lim_{n\rightarrow\infty}\psi_{n}(\beta_{1},\ldots,\beta_{k})
=\sup_{0\leq x\leq 1}\left\{\beta_{1}x+\sum_{i=2}^{k}\beta_{i}x^{e(H_{i})}-\frac{1}{2}I(x)\right\}.
\end{equation}
\end{theorem}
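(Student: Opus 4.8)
The plan is to deduce the general variational formula from the Chatterjee--Varadhan large deviation principle for Erd\H{o}s--R\'enyi graphs via Varadhan's lemma, and then to obtain the one-dimensional reduction from the generalized H\"older inequality.

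\emph{Step 1 (change of reference measure).} Since $|\mathcal{G}_{n}|=2^{\binom{n}{2}}$ and the uniform probability measure on $\mathcal{G}_{n}$ is precisely the Erd\H{o}s--R\'enyi law $G(n,1/2)$, I would write
\[
\psi_{n}(\beta_{1},\ldots,\beta_{k})=\frac{\binom{n}{2}}{n^{2}}\log 2+\frac{1}{n^{2}}\log\mathbb{E}\bigl[e^{n^{2}T(G_{n})}\bigr],\qquad T(G_{n}):=\sum_{i=1}^{k}\beta_{i}t(H_{i},G_{n}),
\]
where $G_{n}$ is drawn from $G(n,1/2)$; the first term tends to $\frac{1}{2}\log 2$.

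\emph{Step 2 (Varadhan's lemma).} Identifying $G_{n}$ with its associated graphon, I would invoke the Chatterjee--Varadhan LDP on the (quotient) space of graphons equipped with the cut metric, at speed $n^{2}$, whose good rate function is $\widetilde{h}\mapsto\frac{1}{2}\log 2+\frac{1}{2}\inf_{h}\iint_{[0,1]^{2}}I(h(x,y))\,dx\,dy$, the infimum running over representatives $h$ of $\widetilde{h}$. Since the homomorphism densities $t(H_{i},\cdot)$ are well known to be continuous with respect to the cut metric and are bounded by $1$, the functional $\widetilde{h}\mapsto\sum_{i}\beta_{i}t(H_{i},\widetilde{h})$ is bounded and continuous, so Varadhan's lemma applies and gives
\[
\lim_{n\to\infty}\frac{1}{n^{2}}\log\mathbb{E}\bigl[e^{n^{2}T(G_{n})}\bigr]=\sup_{h}\Bigl\{\sum_{i=1}^{k}\beta_{i}t(H_{i},h)-\frac{1}{2}\iint_{[0,1]^{2}}I(h(x,y))\,dx\,dy\Bigr\}-\frac{1}{2}\log 2.
\]
Adding the contribution of Step 1, the two copies of $\frac{1}{2}\log 2$ cancel, which yields the general formula. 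The only points that need care are the exact constants in the Chatterjee--Varadhan rate function (the prefactor $\frac{1}{2}$ and the additive $\log 2$ coming from the $p=1/2$ reference measure) and the verification of the integrability hypothesis in Varadhan's lemma, which here is immediate since $T$ is bounded; I expect this bookkeeping to be the main potential obstacle.

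\emph{Step 3 (one-dimensional reduction).} Now assume $H_{1}$ is a single edge and $\beta_{2},\ldots,\beta_{k}\geq 0$. The inequality ``$\geq$'' is trivial: evaluate the supremum at the constant graphons $h\equiv x$, for which $t(H_{1},h)=x$, $t(H_{i},h)=x^{e(H_{i})}$, and $\iint I(x)\,dx\,dy=I(x)$. For ``$\leq$'', the key step is to bound each $t(H_{i},h)$ by applying the generalized H\"older inequality to its $e(H_{i})$ edge-factors, each with exponent $e(H_{i})$:
\[
t(H_{i},h)=\int_{[0,1]^{v(H_{i})}}\prod_{\{a,b\}\in E(H_{i})}h(x_{a},x_{b})\,dx\ \leq\ \iint_{[0,1]^{2}}h(x,y)^{e(H_{i})}\,dx\,dy.
\]
Since $\beta_{i}\geq 0$ for $i\geq 2$ and $t(H_{1},h)=\iint h$, it follows that
\[
\sum_{i=1}^{k}\beta_{i}t(H_{i},h)-\frac{1}{2}\iint_{[0,1]^{2}}I(h(x,y))\,dx\,dy\leq\iint_{[0,1]^{2}}\Bigl(\beta_{1}h(x,y)+\sum_{i=2}^{k}\beta_{i}h(x,y)^{e(H_{i})}-\frac{1}{2}I(h(x,y))\Bigr)dx\,dy,
\]
whose integrand is pointwise at most $\sup_{0\leq x\leq 1}\{\beta_{1}x+\sum_{i=2}^{k}\beta_{i}x^{e(H_{i})}-\frac{1}{2}I(x)\}$. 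Taking the supremum over $h$ closes the gap, and this step is essentially routine.
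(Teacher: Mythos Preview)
The paper does not supply its own proof of this theorem: it is quoted verbatim as a result of Chatterjee and Diaconis, with only the one-line remark that it follows from the Chatterjee--Varadhan large deviation principle for random graphs. Your sketch is exactly this standard derivation---LDP plus Varadhan's lemma for the general formula, then the generalized H\"older inequality for the one-dimensional reduction---and it is correct as written, so there is nothing substantive to compare.
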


The equation \eqref{notDistinguish} implies that
when $\beta_{2},\ldots,\beta_{k}\geq 0$, the limiting free energy
$\psi_{n}(\beta_{1},\ldots,\beta_{k})$ does not distinguish
what subgraphs $H_{i}$ are chosen as long as they share
the same $e(H_{i})$. Moreover, since the optimizing graphon is constant,
a typical graph behaves like an Erd\H{o}s-R\'{e}nyi. This suggests that
sometimes subgraph densities cannot be tuned and the exponential random graph model
may not capture all desriable features of the networks in the applications.
This provides the motivation to study variants of the exponential random graph model,
where some subgraph density is controlled. See Kenyon and Yin \cite{KenyonYin}
for more background and discussions on this.

A natural question is what an exponential random graph will look like if it is subject
to certain constraints? For example, what if it is given that the edge density of the graph
is close to $\frac{1}{2}$? What is the asymptotic structure like for the constrained exponential
random graph models? Do we still have phase transition pheonomena as in the classical
exponential random graph models?

In Kenyon and Yin \cite{KenyonYin}, they introduced a constrained exponential random graph model
subject to the edge density of the graph, which will be the focus of this paper.
Let us consider a constrained exponential random graph model with edge density fixed as $0\leq\epsilon\leq 1$.
The conditional normalization constant $\psi_{n,\delta}(\epsilon,\beta_{2},\ldots,\beta_{k})$ is defined as
\begin{equation}\label{MainModelI}
\psi_{n,\delta}(\epsilon,\beta_{2},\ldots,\beta_{k})
=\frac{1}{n^{2}}\log\sum_{G_{n}\in\mathcal{G}_{n}:|e(G_{n})-\epsilon|<\delta}
\exp\left\{n^{2}\sum_{j=2}^{k}\beta_{j}t(H_{j},G_{n})\right\},
\end{equation}
where $H_{j}$, $2\leq j\leq k$ are given simple finite graphs
and the corresponding conditional probability measure is given by
\begin{equation}\label{MainModelII}
\mathbb{P}_{n,\delta}^{\epsilon}(G_{n})
=\exp\left\{-n^{2}\psi_{n,\delta}(\epsilon,\beta_{2},\ldots,\beta_{k})+n^{2}\sum_{j=2}^{k}\beta_{j}t(H_{j},G_{n})\right\}
1_{|e(G_{n})-\epsilon|<\delta}.
\end{equation}
We shrink the interval around $\epsilon$ by letting $\delta$ go to zero:
\begin{equation}
\psi(\epsilon,\beta_{2},\ldots,\beta_{k}):=\lim_{\delta\rightarrow 0}\lim_{n\rightarrow\infty}
\psi_{n,\delta}(\epsilon,\beta_{2},\ldots,\beta_{k}).
\end{equation}

As a result of the large deviations for random graphs \cite{ChatterjeeVaradhan} and Varadhan's lemma
from large deviation theory, we have the following result.

\begin{theorem}[Kenyon and Yin \cite{KenyonYin}]
\begin{align}
&\psi(\epsilon,\beta_{2},\ldots,\beta_{k})
\\
&=\sup_{\substack{h:[0,1]^{2}\rightarrow[0,1],h(x,y)=h(y,x)
\\
\iint_{[0,1]^{2}}h(x,y)dxdy=\epsilon}}\left\{\sum_{j=2}^{k}\beta_{j}t(H_{j},h)-\frac{1}{2}
\iint_{[0,1]^{2}}I(h(x,y))dxdy\right\},
\nonumber
\end{align}
where $I(x)=x\log x+(1-x)\log(1-x)$.
\end{theorem}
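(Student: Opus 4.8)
The plan is to derive this formula in the same way the unconstrained limiting free energy was obtained in \cite{ChatterjeeDiaconis}: from the large deviation principle (LDP) for Erd\H{o}s--R\'enyi graphs of Chatterjee and Varadhan \cite{ChatterjeeVaradhan} combined with a Varadhan-type argument, the only extra difficulty being the hard edge-density constraint. Let $\widetilde{\mathcal{W}}$ be the space of graphons --- symmetric measurable $h\colon[0,1]^{2}\to[0,1]$ identified up to measure-preserving rearrangements --- with the cut metric $\delta_{\square}$; recall $(\widetilde{\mathcal{W}},\delta_{\square})$ is compact and each $h\mapsto t(H,h)$ is continuous on it. Put $f(h):=\sum_{j=2}^{k}\beta_{j}t(H_{j},h)$ and $e(h):=\iint_{[0,1]^{2}}h(x,y)\,dx\,dy=t(K_{2},h)$, which are bounded and continuous on $\widetilde{\mathcal{W}}$.

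First I would pass to the Erd\H{o}s--R\'enyi reference measure: since $\sum_{G_{n}\in\mathcal{G}_{n}}(\cdot)=2^{\binom{n}{2}}\,\mathbb{E}_{G(n,1/2)}[\,\cdot\,]$ and $\tfrac{1}{n^{2}}\log 2^{\binom{n}{2}}\to\tfrac{1}{2}\log 2$, the quantity $\psi_{n,\delta}(\epsilon,\beta_{2},\dots,\beta_{k})$ equals $\tfrac{1}{n^{2}}\log 2^{\binom{n}{2}}+\tfrac{1}{n^{2}}\log\mathbb{E}_{G(n,1/2)}[\,e^{n^{2}f(h^{G_{n}})}\mathbf{1}_{\{|e(h^{G_{n}})-\epsilon|<\delta\}}\,]$, where $h^{G_{n}}$ is the empirical graphon of $G_{n}\sim G(n,1/2)$. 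By \cite{ChatterjeeVaradhan}, $h^{G_{n}}$ satisfies an LDP on $(\widetilde{\mathcal{W}},\delta_{\square})$ at speed $n^{2}$ with good rate function $\mathcal{I}_{1/2}(h)=\tfrac{1}{2}\iint_{[0,1]^{2}}I_{1/2}(h(x,y))\,dx\,dy$, and since $I_{1/2}(u)=u\log u+(1-u)\log(1-u)+\log 2=I(u)+\log 2$ we have $\mathcal{I}_{1/2}(h)=\tfrac{1}{2}\iint I(h)+\tfrac{1}{2}\log 2$. Because $e(\cdot)$ is continuous, $A_{\delta}^{\circ}:=\{h:|e(h)-\epsilon|<\delta\}$ is open and $\overline{A_{\delta}}:=\{h:|e(h)-\epsilon|\le\delta\}$ is compact; applying the LDP lower bound on $A_{\delta}^{\circ}$ and the LDP upper bound on $\overline{A_{\delta}}$, each combined with the bounded continuous weight $f$ by the covering argument that proves Varadhan's lemma, yields for each fixed $\delta>0$ the sandwich
\[
\sup_{h\in A_{\delta}^{\circ}}\Big\{f(h)-\tfrac{1}{2}\iint I(h)\Big\}\le\liminf_{n\to\infty}\psi_{n,\delta}\le\limsup_{n\to\infty}\psi_{n,\delta}\le\sup_{h\in\overline{A_{\delta}}}\Big\{f(h)-\tfrac{1}{2}\iint I(h)\Big\},
\]
the $\tfrac{1}{2}\log 2$ terms cancelling.

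Then I would let $\delta\downarrow0$. The constant graphon $h\equiv\epsilon$ lies in $A_{\delta}^{\circ}$ for all $\delta>0$, so $\{h:e(h)=\epsilon\}\subseteq A_{\delta}^{\circ}\subseteq\overline{A_{\delta}}$ and both bounds in the sandwich lie between $\sup_{e(h)=\epsilon}\{f-\tfrac{1}{2}\iint I\}$ and $\sup_{\overline{A_{\delta}}}\{f-\tfrac{1}{2}\iint I\}$. To collapse the upper bound, take near-maximizers $h_{\delta}\in\overline{A_{\delta}}$; since $\tfrac{1}{2}\iint I(\cdot)$ differs from the good rate function $\mathcal{I}_{1/2}$ by a constant, it has compact sublevel sets, so the $h_{\delta}$ remain in a fixed compact set, a subsequence converges $h_{\delta_{m}}\to h_{\star}$ with $e(h_{\star})=\epsilon$, and continuity of $f$ together with lower semicontinuity of $\tfrac{1}{2}\iint I(\cdot)$ give $\limsup_{\delta\downarrow0}\sup_{\overline{A_{\delta}}}\{f-\tfrac{1}{2}\iint I\}\le f(h_{\star})-\tfrac{1}{2}\iint I(h_{\star})\le\sup_{e(h)=\epsilon}\{f-\tfrac{1}{2}\iint I\}$. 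Squeezing, $\liminf_{n}\psi_{n,\delta}$ and $\limsup_{n}\psi_{n,\delta}$ both converge, as $\delta\downarrow0$, to $\sup_{e(h)=\epsilon}\{f(h)-\tfrac{1}{2}\iint I(h)\}$, i.e. to the supremum over symmetric $h\colon[0,1]^{2}\to[0,1]$ with $\iint h=\epsilon$ of $\sum_{j=2}^{k}\beta_{j}t(H_{j},h)-\tfrac{1}{2}\iint I(h)$, which is exactly $\psi(\epsilon,\beta_{2},\dots,\beta_{k})$.

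The step I expect to be the main obstacle is the sandwich: Varadhan's lemma controls $\mathbb{E}[e^{n^{2}f}]$ against the \emph{full} LDP, but here the expectation is restricted to the event $\{|e(h^{G_{n}})-\epsilon|<\delta\}$, which is neither open nor closed in $\widetilde{\mathcal{W}}$, so one cannot invoke Varadhan verbatim; the fix is to bracket between the interior and the closure of the constraint, prove the matching open/closed LDP-with-weight estimates directly, and then verify that the resulting suprema have a common $\delta\downarrow0$ limit --- which is where compactness of $\widetilde{\mathcal{W}}$ (equivalently goodness of $\mathcal{I}_{1/2}$) and continuity of $e(\cdot)$ and $f$ are essential. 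The rest --- identifying the reference measure, continuity of homomorphism densities, and the $\log 2$ bookkeeping --- is routine.
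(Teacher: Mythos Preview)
Your proposal is correct and follows exactly the approach the paper indicates: the theorem is quoted from Kenyon and Yin and the paper merely remarks that it is ``a result of the large deviations for random graphs \cite{ChatterjeeVaradhan} and Varadhan's lemma,'' which is precisely the LDP-plus-restricted-Varadhan argument you spell out in detail. One small slip: in your final paragraph you call the constraint set ``neither open nor closed,'' but you had already correctly identified $A_\delta^\circ=\{h:|e(h)-\epsilon|<\delta\}$ as open in $(\widetilde{\mathcal{W}},\delta_\square)$ by continuity of $e(\cdot)$; the genuine issue is only that Varadhan's lemma as usually stated applies on the whole space rather than on a subset, and your open/closed bracketing is the standard and correct fix.
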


As in Kenyon and Yin \cite{KenyonYin}, in our paper, we only
concentrate on the case when $k=2$, i.e., $\beta_{3}=\beta_{4}=\cdots=\beta_{k}=0$,
\begin{equation}\label{MainEqn}
\psi(\epsilon,\beta_{2})=
\sup_{\substack{h:[0,1]^{2}\rightarrow[0,1],h(x,y)=h(y,x)
\\
\iint_{[0,1]^{2}}h(x,y)dxdy=\epsilon}}\left\{\beta_{2}t(H_{2},h)-\frac{1}{2}
\iint_{[0,1]^{2}}I(h(x,y))dxdy\right\}.
\end{equation}
When $H_{2}$ is a triangle, we will call it an edge-triangle model or triangle model.
and when $H_{2}$ is a star, we will call it an edge-star model or star model.

Kenyon and Yin \cite{KenyonYin} mainly considered the \emph{repulsive} regime, i.e., $\beta_{2}<0$. 
They proved that for edge-triangle exponential random graph model, for fixed edge density $\epsilon$,
$\psi^{\epsilon,\beta_{2}}$ is not analytic at at least one value of $\beta_{2}$ when $\beta_{2}$
varies from $0$ to $-\infty$. The same result holds if we replace triangle by a general simple graph
with chromatic number at least $3$. Again for the edge-triangle model, for the special case
when we fix $\epsilon=1/2$, Kenyon and Yin \cite{KenyonYin} showed
that $\psi^{\epsilon,\beta_{2}}$ is analytic everywhere except at one point where the partial derivative
$\frac{\partial}{\partial\beta_{2}}\psi^{\epsilon,\beta_{2}}$ displays a jump discontinuity.

In this paper, we study both the \emph{repulsive}
and \emph{attractive} regimes, with an emphasis on the \emph{attractive} regime, i.e., $\beta_{2}>0$. 

Before we proceed, let us mention
an alternative to exponential random graph models 
that was introduced by Radin and Sadun~\cite{RadinII}, 
where instead of using parameters to control subgraph counts, 
the subgraph densities are controlled directly; 
see also Radin et al.~\cite{RadinIII}, Radin and Sadun~\cite{RadinIV}
and Kenyon et al.~\cite{Kenyon}. For example, we can fix the edge density
and the density of a given simple finite graph $H$ and study the entropy
\begin{align}\label{MicroModel}
\psi(\epsilon,\tau)&:=
-\lim_{\delta\rightarrow 0}\lim_{n\rightarrow\infty}
\frac{1}{n^2}\log\mathbb{P}\left(e(G_{n}) \in (\epsilon-\delta,\epsilon+\delta), \,
t(H,G_{n}) \in (\tau-\delta,\tau+\delta)\right)
\\
&=\sup_{\substack{h:[0,1]^{2}\rightarrow[0,1],h(x,y)=h(y,x)
\\
\iint_{[0,1]^{2}}h(x,y)dxdy=\epsilon, t(H,h)=\tau}}
\left\{-\frac{1}{2}\iint_{[0,1]^{2}}I_{0}(h(x,y))dxdy\right\},
\nonumber
\end{align}
where $\mathbb{P}$ is the uniform probability measure, i.e., Erd\H{o}s-Reny\'{i}
with probability of forming an edge being $\frac{1}{2}$. In \eqref{MicroModel}, $I_{0}(x):=x\log x+(1-x)\log(1-x)+\log 2$.
In the language of statistical mechanics, this is the \emph{micro-canonical} model.
The classical exponential random graph model is the \emph{grand-canonical} model
and the constrained exponential random graph model is the \emph{canonical} model.
There are interesting connections between these three models. Indeed, we'll see later
in this paper that the previous known results about grand-canoncial model can help us
to study the canonical model. Kenyon and Yin \cite{Kenyon} also used the known results
about micro-canonical model to study the canonical model. The interplays and connections
between these three models are worth further investigations in the future.

Before we proceed, we need to review some results from the classical exponential random graph models
and some notations that will be used later in this paper. For the classical exponential random graph models
with $k=2$, the phase transition is well understood for $\beta_{2}$ non-negative and in general
for $p$-star model. The key is the following.

\begin{theorem}[Radin and Yin~\cite{Radin}]
Consider the function
\begin{equation}
\ell(x) := \beta_1 x + \beta_2 x^p - x \log x - (1-x) \log (1-x),\qquad 0\leq x\leq 1.
\end{equation}
For each $(\beta_1,\beta_2)$ 
the function $\ell$ has either one or two local maximizers.
There is a curve $\beta_2 = q(\beta_1)$, $\beta_1 \le \beta_1^c$, with the endpoint
\begin{equation*}
(\beta_{1}^{c},\beta_{2}^{c})=\left(\log(p-1) - \frac{p}{p-1},\frac{p^{p-1}}{(p-1)^p}\right),
\end{equation*}
such that off the curve and at the endpoint, $\ell$ has a 
unique global maximizer, while on the curve away from the 
endpoint, $\ell$ has two global 
maximizers $0 < x_1 < x_2 < 1$. 
The curve $q$ is continuous, decreasing and is called the \emph{phase transition curve}.
\end{theorem}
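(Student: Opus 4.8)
The plan is to reduce the statement to the one–variable calculus of $\ell$. Since $\ell'(x)=\beta_1+p\beta_2 x^{p-1}-\log\frac{x}{1-x}$ with $\ell'(0^+)=+\infty$ and $\ell'(1^-)=-\infty$, the global maximizer always lies in $(0,1)$ and is a critical point, so it suffices to locate and compare the critical points. Setting $g(x):=\log\frac{x}{1-x}-p\beta_2 x^{p-1}$, the critical points of $\ell$ are exactly the roots of $g(x)=\beta_1$, and $\ell''=-g'$. I would analyse the shape of $g$ through $g'(x)=\frac{1}{x(1-x)}-p(p-1)\beta_2 x^{p-2}$, whose sign equals that of $1-p(p-1)\beta_2\,\phi(x)$ with $\phi(x):=x^{p-1}(1-x)$. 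The function $\phi$ vanishes at $0$ and $1$ and attains a strict maximum at $x^\ast=\frac{p-1}{p}$, where $\phi(x^\ast)=\frac{(p-1)^{p-1}}{p^p}$, so $p(p-1)\beta_2\phi(x^\ast)=1$ exactly when $\beta_2=\beta_2^c:=\frac{p^{p-1}}{(p-1)^p}$. Hence: if $\beta_2\le\beta_2^c$ (in particular for every $\beta_2\le 0$) then $g'\ge0$ with equality at one point at most, so $g$ is strictly increasing, $\ell$ has a unique critical point, and therefore a unique global maximizer; and if $\beta_2>\beta_2^c$ then $g$ increases on $(0,a)$, decreases on $(a,b)$ and increases on $(b,1)$ for uniquely determined $0<a<x^\ast<b<1$.

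Now fix $\beta_2>\beta_2^c$. Because $g(0^+)=-\infty$ and $g(1^-)=+\infty$, the equation $g(x)=\beta_1$ has a single root when $\beta_1\notin[g(b),g(a)]$, a root plus a degenerate double root when $\beta_1\in\{g(a),g(b)\}$, and three roots $x_1<a<x_2<b<x_3$ when $g(b)<\beta_1<g(a)$; reading off the sign of $\ell'=\beta_1-g$ shows $x_1,x_3$ are local maximizers and $x_2$ a local minimizer (and in the degenerate cases the fold point is an inflection, not a maximizer), which yields the ``one or two local maximizers'' claim. For the global maximizer only the window $g(b)<\beta_1<g(a)$ matters; there I would put $F(\beta_1,\beta_2):=\ell(x_3)-\ell(x_1)$. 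Since $\ell'(x_i)=0$, the envelope identity gives $\frac{\partial}{\partial\beta_1}\ell(x_i)=x_i$, so $F$ is strictly increasing in $\beta_1$ on $(g(b),g(a))$; moreover at $\beta_1=g(b)$ one has $\ell'<0$ on $(x_1,b)$, and at $\beta_1=g(a)$ one has $\ell'>0$ on $(a,x_3)$, so $F(g(b),\cdot)<0<F(g(a),\cdot)$. Hence there is a unique $\beta_1=q_0(\beta_2)\in(g(b),g(a))$ with $F=0$, on which $\ell$ has the two global maximizers $0<x_1<x_3<1$, and off which (both inside and outside the window) a unique global maximizer.

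It remains to show that $\{(q_0(\beta_2),\beta_2):\beta_2>\beta_2^c\}$ is the graph of a continuous decreasing function $\beta_2=q(\beta_1)$ on $\beta_1\le\beta_1^c$. On this set $x_1\in(0,a)$ and $x_3\in(b,1)$ lie strictly on the increasing branches of $g$, so $\ell''(x_i)=-g'(x_i)<0$; the implicit function theorem then makes $x_1,x_3$ and hence $F$ of class $C^1$, and since $\partial_{\beta_1}F=x_3-x_1>0$ and $\partial_{\beta_2}F=x_3^p-x_1^p>0$ (envelope identity again), $q_0$ is $C^1$ with $q_0'(\beta_2)=-\frac{x_3^p-x_1^p}{x_3-x_1}<0$, so it is strictly decreasing. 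For the endpoints: as $\beta_2\downarrow\beta_2^c$ the fold points $a,b$ and the maximizers $x_1,x_3$ all converge to $x^\ast$, so $q_0(\beta_2)\to\log\frac{x^\ast}{1-x^\ast}-p\beta_2^c(x^\ast)^{p-1}=\log(p-1)-\frac{p}{p-1}=:\beta_1^c$, while as $\beta_2\to\infty$ one has $a\to0$, whence $g(a)\to-\infty$ and therefore $q_0(\beta_2)\to-\infty$. Thus $q_0$ is a decreasing homeomorphism of $(\beta_2^c,\infty)$ onto $(-\infty,\beta_1^c)$, and $q:=q_0^{-1}$, extended by $q(\beta_1^c)=\beta_2^c$, is the phase transition curve.

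The points requiring care are the sign bookkeeping for $\ell'$ that forbids a third local maximizer; the verification that on the transition curve the two competing maximizers stay strictly off the fold points $a,b$, which is what delivers the non-degeneracy $\ell''(x_i)<0$ needed for the implicit function theorem and for the envelope computation of $q_0'$; and the boundary analysis — tracking $a,b,x_1,x_3$ as $\beta_2\downarrow\beta_2^c$ and as $\beta_2\to\infty$, including the algebra that collapses $g(x^\ast)$ at $\beta_2=\beta_2^c$ to the closed form $\beta_1^c=\log(p-1)-\frac{p}{p-1}$. I expect the genuine obstacle to be that boundary behaviour rather than the interior argument, which is driven cleanly by the two envelope identities.
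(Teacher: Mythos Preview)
The paper does not give a proof of this statement: it is quoted as a background result of Radin and Yin \cite{Radin} and is used as input for the paper's own arguments. There is therefore no proof in the paper to compare against.

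That said, your argument is correct and is essentially the standard one. The reduction to the shape of $g(x)=\log\frac{x}{1-x}-p\beta_2 x^{p-1}$ via the unimodal auxiliary $\phi(x)=x^{p-1}(1-x)$ cleanly isolates the threshold $\beta_2^c$, and the envelope identities $\partial_{\beta_1}\ell(x_i)=x_i$, $\partial_{\beta_2}\ell(x_i)=x_i^{p}$ give both the strict monotonicity of $F=\ell(x_3)-\ell(x_1)$ in $\beta_1$ (hence uniqueness of $q_0(\beta_2)$) and the sign of $q_0'$. The boundary computations are right: the collapse $a,b\to x^\ast$ as $\beta_2\downarrow\beta_2^c$ squeezes $q_0(\beta_2)\in(g(b),g(a))$ to $\beta_1^c$, and your check that $p\beta_2 a^{p-1}\to\frac{1}{p-1}$ while $\log\frac{a}{1-a}\to-\infty$ handles the $\beta_2\to\infty$ end. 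One small remark: you assert $x_1,x_3\to x^\ast$ as $\beta_2\downarrow\beta_2^c$, but you do not actually need this---the squeeze on $q_0$ already suffices, and the uniqueness of the maximizer at the endpoint follows directly from the monotonicity of $g$ when $\beta_2=\beta_2^c$.
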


It was further proved in Aristoff and Zhu \cite{AristoffZhu}
that the phase transition curve $q$ is convex, and analytic 
for $\beta_1 < \beta_1^c$.

Constrained exponential random graph model has been studied in Aristoff and Zhu \cite{AristoffZhuII}
for the edge-star model when the graph is directed. They proved that there exists a U-shaped region
in the phase plane such that the asymptotic structure is uniform outside of this U-shaped region
and is non-uniform otherwise. Here, and for the rest of the paper, ``uniform'' (resp. ``non-uniform'')
means the optimizing graphon in the variational problem that appears 
in the formula for the limiting free energy is a constant function (resp. a non-constant function).
For our purpose, it suffices to quote the following theorem
which will be used later in the proof of Proposition \ref{PositiveUniform}.

\begin{theorem}[Aristoff and Zhu \cite{AristoffZhuII}]\label{UThm}
Consider the optimization problem
\begin{equation}
\psi(\epsilon,\beta_{2})=\sup_{g:[0,1]\rightarrow[0,1],\int_{0}^{1}g(x)dx=\epsilon}
\left\{\beta_{2}\int_{0}^{1}g(x)^{p}dx-\int_{0}^{1}I(g(x))dx\right\}.
\end{equation}
There is a U-shaped region 
\begin{equation*}
U_{\epsilon} = \{(\epsilon,\beta_2)\,:\,x_1 < \epsilon < x_2,\,\beta_2 > \beta_2^c\}
\end{equation*}
whose closure has lowest point 
\begin{equation*}
 \left(\epsilon^c,\beta_2^c\right) = \left(\frac{p-1}{p}, \frac{p^{p-1}}{(p-1)^p}\right)
\end{equation*} 
The optimizer is uniform, i.e., $g(x)\equiv\epsilon$ if $(\epsilon,\beta_{2})\in U_{\epsilon}^{c}$
and the optimizer is given by (unique up to permutation)
\begin{equation}
g(x)=
\begin{cases}
x_{1} &\text{if $0<x<\frac{x_2-\epsilon}{x_{2}-x_{1}}$}
\\
x_{2} &\text{if $\frac{x_2-\epsilon}{x_{2}-x_{1}}<x<1$}
\end{cases},
\end{equation}
if $(\epsilon,\beta_{2})\in U_{\epsilon}$, 
where $0 < x_1 < x_2 <1$ are the global maximizers of $\ell$ at 
the point $(q^{-1}(\beta_2), \beta_2)$ on the phase transition curve. 
\end{theorem}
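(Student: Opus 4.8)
The plan is to reformulate the variational problem through convex (Fenchel) duality with respect to the edge-density constraint, reducing it to the unconstrained $p$-star model of Radin--Yin. Write $\ell_{\beta_1,\beta_2}(x)=\beta_1 x+\beta_2 x^p-I(x)$, let $\psi^{\mathrm{unc}}(\beta_1,\beta_2):=\sup_{0\le t\le 1}\ell_{\beta_1,\beta_2}(t)$ be the unconstrained free energy in this normalization, and set $F(g):=\beta_2\int_0^1 g^p-\int_0^1 I(g)$. First I would settle existence: since $F$ and the constraints $\int_0^1 g=\epsilon$, $0\le g\le 1$ are invariant under measure-preserving rearrangements, one may restrict to nondecreasing $g$, a class that is sequentially compact for a.e.-pointwise convergence by Helly's selection theorem, along which $F$ and $\int_0^1 g$ pass to the limit by dominated convergence; so a maximizer exists. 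The duality step is the pointwise identity
\[
F(g)=\int_0^1\ell_{\beta_1,\beta_2}(g(x))\,dx-\beta_1\epsilon\ \le\ \psi^{\mathrm{unc}}(\beta_1,\beta_2)-\beta_1\epsilon,\qquad\forall\,\beta_1\in\mathbb R,
\]
valid whenever $\int_0^1 g=\epsilon$, which gives $\psi(\epsilon,\beta_2)\le\inf_{\beta_1}\{\psi^{\mathrm{unc}}(\beta_1,\beta_2)-\beta_1\epsilon\}$; and any $g$ achieving equality here (for the dual-optimal $\beta_1^\ast$) must be supported, a.e., on the set of \emph{global} maximizers of $\ell_{\beta_1^\ast,\beta_2}$.

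The core of the argument is to pin down $\beta_1^\ast$ using the known structure of the unconstrained phase transition. By Radin--Yin, $\ell_{\beta_1,\beta_2}$ has a unique global maximizer $x^\ast(\beta_1)$ for all $\beta_1$ except, when $\beta_2>\beta_2^c$, at $\beta_1=q^{-1}(\beta_2)$, where it has exactly two, $x_1<x_2$; since $\partial_{\beta_1}\psi^{\mathrm{unc}}=x^\ast$ by the envelope theorem and $\psi^{\mathrm{unc}}(\cdot,\beta_2)$ is convex, $x^\ast(\cdot)$ is nondecreasing and continuous apart from a single jump from $x_1$ to $x_2$ at $\beta_1=q^{-1}(\beta_2)$. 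Hence the range of $x^\ast$ is all of $(0,1)$ when $\beta_2\le\beta_2^c$ and $(0,x_1]\cup[x_2,1)$ when $\beta_2>\beta_2^c$. If $(\epsilon,\beta_2)\in U_\epsilon^c$ then $\epsilon$ lies in this range; choosing $\beta_1$ with $x^\ast(\beta_1)=\epsilon$, the constant $g\equiv\epsilon$ realizes both sides of the bound, so the optimizer is uniform. If $(\epsilon,\beta_2)\in U_\epsilon$, i.e. $x_1<\epsilon<x_2$ and $\beta_2>\beta_2^c$, then for $\beta_1^\ast:=q^{-1}(\beta_2)$ the map $\beta_1\mapsto\psi^{\mathrm{unc}}(\beta_1,\beta_2)-\beta_1\epsilon$ has one-sided derivatives $x_1-\epsilon<0$ and $x_2-\epsilon>0$ at $\beta_1^\ast$, hence minimizes there by convexity; meanwhile the two-valued $g$ equal to $x_1$ on a set of measure $\frac{x_2-\epsilon}{x_2-x_1}$ and $x_2$ elsewhere satisfies $\int_0^1 g=\epsilon$ and $F(g)=\ell_{\beta_1^\ast,\beta_2}(x_1)-\beta_1^\ast\epsilon=\psi^{\mathrm{unc}}(\beta_1^\ast,\beta_2)-\beta_1^\ast\epsilon$, so the bound is attained with no gap and this $g$ is optimal. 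Uniqueness up to permutation is then immediate, since any optimizer is supported on $\{x_1,x_2\}$ and the weights are forced by the mean constraint; and the lowest point of $\overline{U_\epsilon}$ is identified by checking that $x_1,x_2\to\frac{p-1}{p}$ as $\beta_2\downarrow\beta_2^c$, a short computation from the endpoint $(\beta_1^c,\beta_2^c)$ using that $\frac{p-1}{p}$ is the degenerate (inflection) critical point of $\ell_{\beta_1^c,\beta_2^c}$.

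The step I expect to be the main obstacle is proving the weak-duality inequality above is in fact tight: $F$ is not concave (the map $g\mapsto\int g^p$ is convex), so there is no abstract reason for the infimum over $\beta_1$ to be matched by an admissible competitor. The resolution uses the precise fact that the unconstrained global maximizer $x^\ast(\beta_1)$ jumps across the whole interval $(x_1,x_2)$ exactly on the phase transition curve, which simultaneously forces $\beta_1^\ast=q^{-1}(\beta_2)$ inside $U_\epsilon$ and supplies the two-valued primal minimizer that closes the gap. A more hands-on alternative avoids duality: derive the Euler--Lagrange relation $p\beta_2 g^{p-1}-\log\frac{g}{1-g}=\mu$ a.e., use a mass-swap second-variation argument to show $g$ never takes the value that is a local minimum of $\ell_{-\mu,\beta_2}$ (hence takes at most the two local maxima as values), and then optimize over the finitely many remaining parameters; this reproduces the same dichotomy but is computationally heavier.
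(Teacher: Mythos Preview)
This theorem is not proved in the present paper; it is quoted from Aristoff and Zhu \cite{AristoffZhuII} and used as a black box (cf.\ the sentence ``it suffices to quote the following theorem'' preceding the statement, and the phrase ``write down the Euler--Lagrange equation and follow the same arguments as in \cite{AristoffZhuII}'' in the proof of Proposition~\ref{PositiveUniform}). So there is no in-paper proof to compare against.

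That said, your proposal is a correct and rather clean argument, and it is genuinely different from what the references indicate. The original treatment in \cite{AristoffZhuII} proceeds via the Euler--Lagrange equation $p\beta_2 g^{p-1}-\log\frac{g}{1-g}=\mu$, a second-variation/mass-swap argument to exclude the local-minimum value of $\ell_{-\mu,\beta_2}$, and then a finite-dimensional optimization over the remaining two candidate values---exactly the ``hands-on alternative'' you sketch at the end. Your primary route instead exploits Fenchel duality against the linear constraint $\int g=\epsilon$: the pointwise bound $F(g)\le \psi^{\mathrm{unc}}(\beta_1,\beta_2)-\beta_1\epsilon$ together with the Radin--Yin description of the subdifferential $\partial_{\beta_1}\psi^{\mathrm{unc}}$ (a single point off the phase-transition curve, the interval $[x_1,x_2]$ on it) both locates the dual minimizer $\beta_1^\ast$ and supplies the primal witness that closes the gap. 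This buys you uniqueness almost for free (any optimizer is supported on the global-maximizer set of $\ell_{\beta_1^\ast,\beta_2}$, and the mean constraint fixes the weights), and it makes transparent why the boundary of $U_\epsilon$ is exactly the image of the jump discontinuity of $\beta_1\mapsto x^\ast(\beta_1)$. The Euler--Lagrange route, by contrast, does not need the global Radin--Yin picture as input and is more self-contained, at the cost of the heavier second-variation computation you allude to. Both lead to the same dichotomy; your duality argument is the more conceptual of the two.
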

Note that the optimizing graphon in the variational problem gives us the asymptotic structure
of large graphs. Intuitively, if the optimizer is uniform, the typical graph behaves like an Erd\H{o}s-R\'{e}nyi graph
with the edge connection probability given by the unique optimizer; 
and if the optimizer is bi-podal or multi-podal, then the typical graph behaves like a stochastic block model.

The paper is devoted to study the constrained exponential random graph model. 
In Section \ref{UniformSection}, we will give some very general results
on the uniform and non-uniform structures for the constrained exponential random graph models
for both the attractive regime and the repulsive regime. 
Let $\beta_{2}$ be the parameter associated with the density of
a subgraph $H$, and $\epsilon$ be the fixed edge density. When $\beta_{2}$ is close to zero,
in either attractive or repulsive regime, we will show that the optimal graphon
is uniform and when $\beta_{2}$ is sufficiently large, the optimal graphon will not
be uniform. This is proved and estimates are computed for critical values of the parameters.
In Section \ref{EdgeStarSection}, 
further properties will be studied for the edge-star model, including
when the asymptotic structure is uniform and when the asymptotic structure is multi-podal.
When the underlying graph $H$ is a two-star, more refined results will be given
in Section \ref{TwoStarSection}, including a sharp transition along the line $\epsilon=1/2$,
a stationary point and phase transitions. We conclude the paper with summary and open questions
in Section \ref{SummarySection}.

\section{Uniform and Non-Uniform Structures}\label{UniformSection}

In this section, we study the asymptotic structure of the constrained exponential random graph model
defined in \eqref{MainModelI} and \eqref{MainModelII}.
In particular, we are interested to study when the optimizing graphon in \eqref{MainEqn} is uniform
and when it is not.
When $\beta_{2}\geq 0$, the model favors more subgraph $H$ and the opposite
is true when $\beta_{2}\leq 0$. Consequently, when $\beta_{2}\geq 0$, it is called
the \emph{attractive} regime and when $\beta_{2}<0$, it is called
the \emph{repulsive} regime. We first present some general results
about the asymptotic structure in the attractive regime. Then we will discuss
some general results for the repulsive regime.

\subsection{Attractive Regime}

\begin{proposition}\label{PositiveUniform}
Consider a simple graph $H$ and the conditional exponential random graph model
defined in \eqref{MainModelI} and \eqref{MainModelII}. 
There exists a $U$-shaped region defined in Theorem \ref{UThm} such that the optimizing graphon
in \eqref{MainEqn} is uniform
if $(\epsilon,2\beta_{2})$ is outside of this $U$-shaped region and $\beta_{2}\geq 0$.
\end{proposition}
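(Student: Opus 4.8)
The plan is to reduce the graphon optimization problem \eqref{MainEqn} to the one-dimensional function optimization treated in Theorem \ref{UThm}, and then quote that theorem. The first step is to rewrite the objective in \eqref{MainEqn}. Since $H$ is a $p$-star (this is the case the proposition is really about; for a general simple graph $H$ one argues analogously using the structure of $t(H,h)$), we have $t(H_2,h) = \int_{[0,1]} \left(\int_{[0,1]} h(x,y)\,dy\right)^{p} dx$. Writing $g(x) := \int_{[0,1]} h(x,y)\,dy$ for the degree function, Jensen's inequality applied to the strictly convex entropy $I$ gives $\iint_{[0,1]^2} I(h(x,y))\,dx\,dy \geq \int_{[0,1]} I(g(x))\,dx$, with equality when $h(x,y)$ depends only on one of its arguments, i.e. when $h(x,y) = g(x)$ is itself a step function in one variable (more precisely, when for a.e. $x$ the slice $h(x,\cdot)$ is constant). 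Hence
\begin{equation}
\psi(\epsilon,\beta_2) \leq \sup_{g:[0,1]\to[0,1],\,\int_0^1 g(x)\,dx = \epsilon}\left\{\beta_2 \int_0^1 g(x)^p\,dx - \int_0^1 I(g(x))\,dx\right\} =: \widetilde\psi(\epsilon,\beta_2),
\end{equation}
and conversely, any admissible $g$ yields an admissible $h(x,y):=g(x)$ only if this is symmetric, which it is not in general — so one must symmetrize. The cleanest fix is to note that the optimizer in Theorem \ref{UThm} is a two-valued step function, so one takes $h(x,y) = \mathbf{1}\{x,y \in A\} x_2 + (\text{rest}) x_1$ type constructions (a block two-podal graphon), which \emph{is} symmetric and realizes the degree function $g$ up to the diagonal contribution, which is negligible; this shows $\psi(\epsilon,\beta_2) \geq \widetilde\psi(\epsilon,\beta_2)$ as well, so the two agree.

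Once this reduction is in place, I apply Theorem \ref{UThm} with the parameter $\beta_2$ there replaced by our $\beta_2$ — but here one must be careful about the factor of $\tfrac12$: the objective in \eqref{MainEqn} has $-\tfrac12 \iint I$, whereas Theorem \ref{UThm} has $-\int I(g)$. Dividing the $h$-problem through, the effective star coefficient in the reduced one-dimensional problem is $2\beta_2$, which is exactly why the proposition states the $U$-shaped region is entered at $(\epsilon, 2\beta_2)$ rather than $(\epsilon,\beta_2)$. So: if $(\epsilon, 2\beta_2)$ lies outside $U_\epsilon$ and $2\beta_2 \geq 0$, Theorem \ref{UThm} gives that the one-dimensional optimizer is the constant $g \equiv \epsilon$, hence the graphon optimizer in \eqref{MainEqn} is the constant graphon $h \equiv \epsilon$, i.e. uniform. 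For the uniqueness/identification of the optimizer as genuinely uniform (not merely that a uniform optimizer exists) I would invoke strict convexity of $I$ together with the uniqueness statement in Theorem \ref{UThm}.

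The main obstacle is the step showing $\psi = \widetilde\psi$, i.e. that passing to the degree function loses nothing. The inequality $\psi \leq \widetilde\psi$ via Jensen is routine, but the reverse requires producing, for a given one-variable step function $g$, an honest symmetric graphon with the same value of the objective. For a general subgraph $H$ this is delicate because $t(H,h)$ need not be expressible through $g$ alone; however, for the star model $t(H,h)$ depends only on the degree function, and for that case a two-block symmetric graphon does the job with the diagonal correction vanishing in the limit. So I expect the proof to restrict attention, in effect, to the case where $t(H,h)$ is a functional of the degree sequence (stars), with the general-$H$ statement in the proposition being slightly informal or meant only for such $H$; alternatively one notes that for any $H$ and any graphon $h$ with degree function $g$, $t(H,h)$ is dominated in the relevant direction, but matching it exactly is the subtle point. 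I would flag this and lean on the star structure.
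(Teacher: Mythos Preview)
Your approach proves the wrong proposition. What you have written---restrict to $p$-stars, express $t(H,h)=\int g^{p}$ in terms of the degree function $g(x)=\int h(x,y)\,dy$, apply Jensen to $I$, and reduce to the one-dimensional problem of Theorem~\ref{UThm}---is precisely the paper's proof of Proposition~\ref{StarUniform}, which is a later, star-specific result. Proposition~\ref{PositiveUniform} is stated and proved for an \emph{arbitrary} simple graph $H$, and your assertion that ``this is the case the proposition is really about'' is incorrect.

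The paper's actual argument for general $H$ uses a different inequality at the first step: the generalized H\"older inequality
\[
t(H,h)\;\le\;\iint_{[0,1]^{2}} h(x,y)^{e(H)}\,dx\,dy,
\]
valid for any simple $H$ with $e(H)$ edges and any $\beta_{2}\ge 0$. This replaces $t(H,h)$ by a pointwise functional of $h$, so the objective becomes $\beta_{2}\iint h^{e(H)}-\tfrac12\iint I(h)$. Using symmetry $h(x,y)=h(y,x)$ to pass to the half-square $\{0<x<y<1\}$ (this is where the factor $2$ appears), one lands on exactly the form in Theorem~\ref{UThm} with $p=e(H)$ and parameter $2\beta_{2}$; outside the $U$-shaped region the optimum is $\beta_{2}\epsilon^{e(H)}-\tfrac12 I(\epsilon)$. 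Your degree-function reduction cannot do this for general $H$, since $t(H,h)$ is not a functional of $g$ alone.

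Separately, your ``main obstacle'' is not an obstacle. You do not need $\psi=\widetilde\psi$; you only need $\psi\le\widetilde\psi$ (which you have) together with the trivial lower bound $\psi(\epsilon,\beta_{2})\ge\beta_{2}\epsilon^{e(H)}-\tfrac12 I(\epsilon)$ obtained by plugging in the constant graphon $h\equiv\epsilon$. The paper uses exactly this sandwich; there is no need to manufacture a symmetric graphon matching an arbitrary one-variable $g$. (Also, your displayed $\widetilde\psi$ is missing the factor $\tfrac12$ on $\int I(g)$; once restored, scaling gives the effective parameter $2\beta_{2}$, as you correctly note in words.)
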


\begin{proof}
For $\beta_{2}\geq 0$, by generalized H\"{o}lder's inequality and equation \eqref{MainEqn},
\begin{align}
&\psi(\epsilon,\beta_{2})
\\
&=\sup_{\substack{\int_{0}^{1}\int_{0}^{1}h(x,y)dxdy=\epsilon
\\
h(x,y)=h(y,x)}}\left\{\beta_{2}t(H,h)-\frac{1}{2}I(h)\right\}
\nonumber
\\
&\leq\sup_{\substack{\int_{0}^{1}\int_{0}^{1}h(x,y)dxdy=\epsilon
\\
h(x,y)=h(y,x)}}\left\{\beta_{2}\int_{0}^{1}\int_{0}^{1}h(x,y)^{e(H)}dxdy
-\frac{1}{2}\int_{0}^{1}\int_{0}^{1}I(h(x,y))dxdy\right\}
\nonumber
\\
&=\sup_{\int_{0}^{1}\int_{0}^{1}h(x,y)dxdy=\epsilon}\left\{2\beta_{2}\iint_{0<x<y<1}h(x,y)^{e(H)}dxdy
-\iint_{0<x<y<1}I(h(x,y))dxdy\right\}.
\nonumber
\end{align}
We can write down the Euler-Lagrange equation and follow the same arguments as
in \cite{AristoffZhuII} to show that for $(\epsilon,2\beta_{2})$ outside of a $U$-shaped region,
\begin{align}
&\sup_{\int_{0}^{1}\int_{0}^{1}h(x,y)dxdy=\epsilon}\left\{2\beta_{2}\iint_{0<x<y<1}h(x,y)^{e(H)}dxdy
-\iint_{0<x<y<1}I(h(x,y))dxdy\right\}
\\
&\qquad\qquad\qquad\qquad
=\beta_{2}\epsilon^{e(H)}-\frac{1}{2}I(\epsilon)
\nonumber.
\end{align}
On the other hand, it's clear that $\psi(\epsilon,\beta_{2})\geq\beta_{2}\epsilon^{e(H)}-\frac{1}{2}I(\epsilon)$, 
which concludes the proof.
\end{proof}

Proposition \ref{PositiveUniform} shows that outside a $U$-shaped region in the attractive regime,
the optimizing graphon is uniform, that is, the typical graph behaves
like an Erd\H{o}s-R\'{e}nyi graph.

It is then natural to study the optimizing graphon inside the $U$-shaped region. 
We are able to obtain some partial results here.
Note that, for any $\epsilon\in(0,1)$ so that $(\epsilon,\beta_{2})$ is inside the $U$-shaped region
for any sufficiently large $\beta_{2}$.
We will indeed show later that for large finite $\beta_{2}$, 
the optimizing graphon is not uniform.

First, let us study the limiting behavior as $\beta_{2}\rightarrow\infty$.
When $H$ is a two-star, it is known that for fixed edge density $\epsilon$, the maximal possible
two-star density is known to be, see e.g. \cite{Ahlswede}
\begin{equation}
s(\epsilon)=
\begin{cases}
2\epsilon+(1-\epsilon)^{3/2}-1 &\text{$0\leq\epsilon\leq\frac{1}{2}$},
\\
\epsilon^{3/2} &\text{$\frac{1}{2}\leq\epsilon\leq 1$}.
\end{cases}
\end{equation}
And the maximizer is given by an $h$-clique for $\frac{1}{2}\leq\epsilon\leq 1$
\begin{equation}
h_{c}(x,y)=
\begin{cases}
1 &\text{if $x<\sqrt{\epsilon}$ and $y<\sqrt{\epsilon}$}
\\
0 &\text{otherwise}
\end{cases},
\end{equation}
and the maximizer is given by an $h$-anticlique for $0\leq\epsilon\leq\frac{1}{2}$
\begin{equation}
h_{a}(x,y)=
\begin{cases}
0 &\text{if $x>1-\sqrt{1-\epsilon}$ and $y>1-\sqrt{1-\epsilon}$}
\\
1 &\text{otherwise}
\end{cases}.
\end{equation}

For the triangle model, i.e., when $H$ is a triangle, given the edge density $\epsilon$,
the maximal possible triangle density is $t(\epsilon)=\epsilon^{3/2}$,
see \cite{Pikhurko} and the references therein. It is easy to
check that the clique 
\begin{equation}
h_{c}(x,y)=
\begin{cases}
1 &\text{if $x<\sqrt{\epsilon}$ and $y<\sqrt{\epsilon}$}
\\
0 &\text{otherwise}
\end{cases}
\end{equation}
gives the optimizer.

\begin{proposition}\label{Limit}
\begin{equation}
\lim_{\beta_{2}\rightarrow\infty}\frac{1}{\beta_{2}}\psi(\epsilon,\beta_{2})
=\sup_{\iint_{[0,1]^{2}}h(x,y)dxdy=\epsilon,h(x,y)=h(y,x)}t(h,H).
\end{equation}
In particular, for the two-star model
\begin{equation}
\lim_{\beta_{2}\rightarrow\infty}\frac{1}{\beta_{2}}\psi(\epsilon,\beta_{2})
=
\begin{cases}
2\epsilon+(1-\epsilon)^{3/2}-1 &\text{$0\leq\epsilon\leq\frac{1}{2}$},
\\
\epsilon^{3/2} &\text{$\frac{1}{2}\leq\epsilon\leq 1$}.
\end{cases}
\end{equation}
and for the triangle model
\begin{equation}
\lim_{\beta_{2}\rightarrow\infty}\frac{1}{\beta_{2}}\psi(\epsilon,\beta_{2})
=\epsilon^{3/2}.
\end{equation}
\end{proposition}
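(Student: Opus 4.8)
The plan is to prove the first identity by a direct two-sided bound on $\psi(\epsilon,\beta_2)$, and then to read off the two explicit formulas from the extremal subgraph densities quoted above. For the \emph{upper bound}, recall from \eqref{MainEqn} (with the subgraph $H$ playing the role of $H_2$) that
\begin{equation*}
\psi(\epsilon,\beta_2)=\sup_{\substack{h:[0,1]^2\to[0,1],\,h(x,y)=h(y,x)\\ \iint_{[0,1]^2}h(x,y)dxdy=\epsilon}}\left\{\beta_2 t(H,h)-\tfrac12\iint_{[0,1]^2}I(h(x,y))\,dxdy\right\}.
\end{equation*}
Since $I(x)=x\log x+(1-x)\log(1-x)\in[-\log 2,0]$ for $x\in[0,1]$, the entropy term obeys $-\tfrac12\iint I(h)\le\tfrac12\log 2$ uniformly in $h$, so
\begin{equation*}
\psi(\epsilon,\beta_2)\le\beta_2\,\Big(\sup_{\iint h=\epsilon,\,h(x,y)=h(y,x)}t(H,h)\Big)+\tfrac12\log 2.
\end{equation*}
Because $0\le t(H,h)\le1$, this supremum is a finite number; dividing by $\beta_2>0$ and letting $\beta_2\to\infty$ yields $\limsup_{\beta_2\to\infty}\beta_2^{-1}\psi(\epsilon,\beta_2)\le\sup_h t(H,h)$.

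For the \emph{lower bound}, fix an arbitrary admissible symmetric graphon $h$ with $\iint h=\epsilon$. Since $I\le0$ on $[0,1]$, the entropy term is nonnegative, so $\psi(\epsilon,\beta_2)\ge\beta_2 t(H,h)$; dividing by $\beta_2>0$ and sending $\beta_2\to\infty$ gives $\liminf_{\beta_2\to\infty}\beta_2^{-1}\psi(\epsilon,\beta_2)\ge t(H,h)$. Taking the supremum over all such $h$ produces the matching lower bound, and the first identity follows.

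It remains to evaluate $\sup_h t(H,h)$ in the two stated cases. For the two-star, the maximum of $t(H,h)$ over symmetric graphons with edge density $\epsilon$ equals $s(\epsilon)$: this is the graphon form of the extremal bound of \cite{Ahlswede}, with the maximum attained by the anticlique $h_a$ for $0\le\epsilon\le\tfrac12$ and by the clique $h_c$ for $\tfrac12\le\epsilon\le1$; one checks directly that $h_a$ and $h_c$ have edge density $\epsilon$ and two-star density $s(\epsilon)$. Likewise, for the triangle the maximum is $\epsilon^{3/2}$, attained by the clique $h_c$ (see \cite{Pikhurko}). Substituting these into the first identity yields the two displayed formulas.

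The general identity has essentially no obstacle—the argument is entirely elementary once one notes that the entropy integrand is bounded and of a fixed sign. The only nontrivial ingredient is the evaluation of $\sup_h t(H,h)$ for the two-star and the triangle, which is precisely the content of the cited extremal graph-theory results; confirming that the proposed extremal graphons are admissible and attain these bounds is a short direct computation.
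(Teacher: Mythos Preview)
Your proof is correct and follows essentially the same approach as the paper: both arguments exploit the elementary bound $-\log 2\le I(x)\le 0$ on $[0,1]$ to sandwich $\psi(\epsilon,\beta_2)$ between $\beta_2\sup_h t(H,h)$ and $\beta_2\sup_h t(H,h)+\tfrac12\log 2$, and then invoke the cited extremal results for the two-star and triangle cases.
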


\begin{proof}
It is easy to check that $I(x)=x\log x+(1-x)\log(1-x)$ is decreasing on $[0,1/2]$ and increasing
on $[1/2,1]$ and $I(0)=I(1)=0$, $I(1/2)=-\log 2$. Therefore, for any $\beta_{2}>0$,
\begin{align}
&\sup_{\iint_{[0,1]^{2}}h(x,y)dxdy=\epsilon,h(x,y)=h(y,x)}\beta_{2}\cdot t(h,H)
\\
&\qquad\qquad\qquad
\leq\psi(\epsilon,\beta_{2})\leq\sup_{\iint_{[0,1]^{2}}h(x,y)dxdy=\epsilon,h(x,y)=h(y,x)}\beta_{2}\cdot t(h,H)+\frac{1}{2}\log 2.
\nonumber
\end{align}
\end{proof}

\begin{remark}
Let $\mathcal{H}$ be the set of optimizers of $t(h,H)$ given edge density $\epsilon$. 
Let $h_{\epsilon,\beta_{2}}$ be an optimizing graphon for $(\epsilon,\beta_{2})$. 
Then, the distance between $h_{\epsilon,\beta_{2}}$ and $\mathcal{H}$ goes to zero
as $\beta_{2}\rightarrow\infty$ in the cut metric. To see this, suppose not, 
since the space of reduced graphons is compact, see e.g. \cite{Lov}, there must be an accumulation point $h_{\epsilon}
\notin\mathcal{H}$ for the sequence $(h_{\epsilon,\beta_{2}})_{\beta_{2}}$. 
There exists a subsequence $h_{\epsilon,\tilde{\beta}_{2}}\rightarrow h_{\epsilon}$ in the cut metric
which implies that $t(h_{\epsilon,\tilde{\beta}_{2}})\rightarrow t(h_{\epsilon})$ as $\tilde{\beta}_{2}\rightarrow\infty$.
By Proposition \ref{Limit}, it is easy to see that $t(h_{\epsilon})=\sup_{\iint_{[0,1]^{2}}h(x,y)dxdy=\epsilon,h(x,y)=h(y,x)}t(h,H)$.
Therefore, we must have $h_{\epsilon}\in\mathcal{H}$ which is a contradiction.
\end{remark}

Recall that given the edge density $\epsilon$, the maximal possible triangle density
is $\epsilon^{3/2}$ achieved by the clique 
$h_{c}(x,y)=1$ if $0<x,y<\sqrt{\epsilon}$ and $h_{c}(x,y)=0$ otherwise.
Thus, it is easy to compute that
\begin{align}
&\beta_{2}\iiint_{[0,1]^{3}}h_{c}(x,y)h_{c}(y,z)h_{c}(z,x)dxdydz
-\frac{1}{2}\iint_{[0,1]^{2}}I(h_{c}(x,y))dxdy
\\
&\qquad\qquad\qquad\qquad\qquad
-\beta_{2}\epsilon^{3}+\frac{1}{2}I(\epsilon)
\nonumber
\\
&=\beta_{2}(\epsilon^{3/2}-\epsilon^{3})+\frac{1}{2}[\epsilon\log\epsilon+(1-\epsilon)\log(1-\epsilon)].
\nonumber
\end{align}
Hence, the optimizer for the triangle model is not uniform if
\begin{equation}
\beta_{2}>\frac{-\frac{1}{2}[\epsilon\log\epsilon+(1-\epsilon)\log(1-\epsilon)]}{\epsilon^{3/2}-\epsilon^{3}}.
\end{equation}

In general, we have the following result.

\begin{proposition}\label{VE}
Let $H$ be a simple graph with number of vertices and edges
denoted by $v(H)$ and $e(H)$ respectively such that $e(H)>v(H)/2$.
Then, the optimizing graphon in \eqref{MainEqn} is non-uniform if
\begin{equation}
\beta_{2}>\frac{-\frac{1}{2}[\epsilon\log\epsilon+(1-\epsilon)\log(1-\epsilon)]}{\epsilon^{v(H)/2}-\epsilon^{e(H)}}.
\end{equation}
\end{proposition}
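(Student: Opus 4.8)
The plan is to bound $\psi(\epsilon,\beta_2)$ from below by inserting one well-chosen competitor graphon into the supremum \eqref{MainEqn} and comparing the result with the value at the uniform graphon $h\equiv\epsilon$, which contributes $\beta_2\epsilon^{e(H)}-\frac{1}{2}I(\epsilon)$ (here $I(\epsilon)=\epsilon\log\epsilon+(1-\epsilon)\log(1-\epsilon)$). Since $h\equiv\epsilon$ is feasible, if it were the optimizer then $\psi(\epsilon,\beta_2)$ would equal exactly this quantity, so it suffices to exhibit a feasible $h$ with strictly larger value. I would take the clique graphon $h_c$ equal to $1$ on $\{x<\sqrt\epsilon,\ y<\sqrt\epsilon\}$ and $0$ elsewhere, which already appeared above in the triangle computation; it is symmetric, $\{0,1\}$-valued, and has edge density $(\sqrt\epsilon)^2=\epsilon$, hence is feasible for $0<\epsilon<1$.

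For $h_c$ the entropy term $\frac{1}{2}\iint_{[0,1]^2}I(h_c(x,y))\,dxdy$ vanishes because $I(0)=I(1)=0$, and the product $\prod_{\{i,j\}\in E(H)}h_c(x_i,x_j)$ equals $1$ exactly when $x_i<\sqrt\epsilon$ for every non-isolated vertex $i$ of $H$, so that $t(H,h_c)=\epsilon^{v'/2}\geq\epsilon^{v(H)/2}$, where $v'\leq v(H)$ is the number of non-isolated vertices (using $0<\epsilon<1$). Hence $\psi(\epsilon,\beta_2)\geq\beta_2\epsilon^{v(H)/2}$ whenever $\beta_2\geq 0$. Therefore the uniform graphon cannot be a maximizer once $\beta_2\epsilon^{v(H)/2}>\beta_2\epsilon^{e(H)}-\frac{1}{2}I(\epsilon)$, i.e. once $\beta_2\big(\epsilon^{v(H)/2}-\epsilon^{e(H)}\big)>-\frac{1}{2}I(\epsilon)$. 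The hypothesis $e(H)>v(H)/2$ together with $0<\epsilon<1$ gives $\epsilon^{v(H)/2}-\epsilon^{e(H)}>0$, so dividing through yields precisely $\beta_2>\frac{-\frac{1}{2}[\epsilon\log\epsilon+(1-\epsilon)\log(1-\epsilon)]}{\epsilon^{v(H)/2}-\epsilon^{e(H)}}$. Under this condition the unique constant feasible graphon $h\equiv\epsilon$ is strictly suboptimal, so every maximizer of \eqref{MainEqn} is non-uniform.

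The argument is self-contained and there is no substantive obstacle; the only points needing a word of care are the existence of a maximizer of \eqref{MainEqn} — guaranteed by the standard compactness of the space of reduced graphons in the cut metric together with the continuity of $h\mapsto t(H,h)$ and the lower semicontinuity of $h\mapsto\frac{1}{2}\iint_{[0,1]^2}I(h(x,y))\,dxdy$, exactly as used in the Remark above — and the bookkeeping for possible isolated vertices of $H$, which is handled by the monotonicity of $s\mapsto\epsilon^s$ for $\epsilon\in(0,1)$. If one simply assumes $H$ has no isolated vertices then $t(H,h_c)=\epsilon^{v(H)/2}$ exactly and the latter point disappears.
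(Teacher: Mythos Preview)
Your proof is correct and follows essentially the same route as the paper: insert the clique graphon $h_c$ (equal to $1$ on $[0,\sqrt\epsilon)^2$ and $0$ elsewhere), use $I(0)=I(1)=0$ to kill the entropy term, compute $t(H,h_c)=\epsilon^{v(H)/2}$, and compare with the value at $h\equiv\epsilon$. Your handling of possible isolated vertices of $H$ and your remark on the existence of a maximizer are careful additions that the paper leaves implicit, but the core argument is identical.
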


\begin{remark}
Recall that for the classical exponential random graph model, the optimizing graphon is uniform
for any $\beta_{2}>0$, see Chatterjee and Diaconis \cite{ChatterjeeDiaconis}. Proposition \ref{VE}
demonstrates that this is not the case for constrained exponential random graph models.
Indeed, for sufficiently large $\beta_{2}$, you always have non-uniform structure.
It would be then very interesting to study for finite large $\beta_{2}$, the exact structure
for the optimizing graphon.
>From the discussions above Proposition \ref{VE} and also the proof given below, it is
natural to conjecture that for large finite $\beta_{2}$, the optimizing graphon
is a clique with size determined by the edge density. It remains an open problem
to prove or disprove this.
\end{remark}

\begin{proof}[Proof of Proposition \ref{VE}]
We define the clique 
$h_{c}(x,y)=1$ if $0<x,y<\sqrt{\epsilon}$ and $h_{c}(x,y)=0$ otherwise.
Thus, it is easy to compute that
\begin{align}
&\beta_{2}\int_{[0,1]^{v(H)}}\prod_{\{i,j\}\in E(H)}h_{c}(x_{i},x_{j})dx_{1}\cdots dx_{v(H)}
-\frac{1}{2}\iint_{[0,1]^{2}}I(h_{c}(x,y))dxdy
\\
&\qquad\qquad\qquad\qquad\qquad
-\beta_{2}\epsilon^{e(H)}+\frac{1}{2}I(\epsilon)
\nonumber
\\
&=\beta_{2}(\epsilon^{v(H)/2}-\epsilon^{e(H)})+\frac{1}{2}[\epsilon\log\epsilon+(1-\epsilon)\log(1-\epsilon)].
\nonumber
\end{align}
Hence, the optimizer is not uniform if
\begin{equation}
\beta_{2}>\frac{-\frac{1}{2}[\epsilon\log\epsilon+(1-\epsilon)\log(1-\epsilon)]}{\epsilon^{v(H)/2}-\epsilon^{e(H)}}.
\end{equation}
\end{proof}

\subsection{Repulsive Regime}

For the \emph{repulsive} regime, i.e., $\beta_{2}\leq 0$, Kenyon and Yin \cite{KenyonYin}
showed non-analyticity as $\beta_{2}$ varies from $0$ to $-\infty$ when $H$ is a general simple graph
with chromatic number at least $3$. This implicitly tells us that the optimizing graphon
cannot be uniform everywhere for $\beta_{2}\leq 0$. 
Furthermore, for the edge-triangle model along $\epsilon=1/2$, using the micro model results
by Radin and Sadun \cite{RadinIV}, it was pointed out in Kenyon and Yin \cite{KenyonYin}
that for negative $\beta_{2}$,
\begin{align}
&\psi\left(\frac{1}{2},\beta_{2}\right)
\\
&=\sup_{0\leq\tau\leq\frac{1}{8}}\sup_{
\substack{
\iint_{[0,1]^{2}}h(x,y)dxdy=\frac{1}{2}
\\
\iiint_{[0,1]^{3}}h(x,y)h(y,z)h(z,x)dxdydz=\tau
\\
h(x,y)=h(y,x)}}\left\{\beta_{2}\tau-\frac{1}{2}\iint_{[0,1]^{2}}I(h(x,y))dxdy\right\}
\nonumber
\\
&=\sup_{0\leq\tau\leq\frac{1}{8}}\left\{\beta_{2}\tau-\frac{1}{2}\iint_{[0,1]^{2}}I(h_{\tau}(x,y))dxdy\right\},
\nonumber
\end{align}
where
\begin{equation}
h_{\tau}(x,y)=
\begin{cases}
\frac{1}{2}+(\frac{1}{8}-\tau)^{\frac{1}{3}} &\text{if $x<\frac{1}{2}<y$ or $x>\frac{1}{2}>y$}
\\
\frac{1}{2}-(\frac{1}{8}-\tau)^{\frac{1}{3}} &\text{if $x,y<\frac{1}{2}$ or $x,y>\frac{1}{2}$}
\end{cases}
\end{equation}
is the optimizer for the micro model and thus $h_{\tau(\beta_{2})}$ is the optimizer for the canonical
model where
\begin{equation}
\tau(\beta_{2}):=\arg\max
\left\{\beta_{2}\tau-\frac{1}{2}I\left(\frac{1}{2}+\left(\frac{1}{8}-\tau\right)^{1/3}\right)\right\}.
\end{equation}
It is easy to verify that there exists some $\beta_{2}^{c}<0$
so that $\tau(\beta_{2})=\frac{1}{8}$ if $\beta_{2}\geq\beta_{2}^{c}$
and $\tau(\beta_{2})<\frac{1}{8}$ otherwise.
This tells us that along $\epsilon=1/2$ for the edge-triangle model,
the optimizing graphon is uniform for $\beta_{2}^{c}\leq\beta_{2}\leq 0$
and non-uniform for $\beta_{2}<\beta_{2}^{c}$. 

For general $\epsilon\leq 1/2$, 
\begin{equation}
\psi\left(\frac{1}{2},\beta_{2}\right)
\geq\sup_{0\leq\tau\leq\frac{1}{8}}\left\{\beta_{2}\tau-\frac{1}{2}\iint_{[0,1]^{2}}I(h_{\tau}(x,y))dxdy\right\},
\end{equation} 
where 
\begin{equation}
h_{\tau}(x,y)=
\begin{cases}
\epsilon+(\epsilon^{3}-\tau)^{1/3} &\text{if $x<\frac{1}{2}<y$ or $y<\frac{1}{2}<x$}
\\
\epsilon-(\epsilon^{3}-\tau)^{1/3} &\text{otherwise}
\end{cases}
\end{equation}
is a local optimizer for the micro model with $\tau$ being the triangle density (see Radin and Sadun \cite{RadinII}).
By the same analysis as before,
we can see that the optimizing graphon is non-uniform for $\beta_{2}<\beta_{2}^{c}$, where $\beta_{2}^{c}<0$ is a critical value. 
If indeed $h_{\tau}$ is a global optimizer, then the optimizing graphon is uniform for $\beta_{2}^{c}\leq\beta_{2}\leq 0$.

\begin{proposition}\label{NegativeUniform}
For $\beta_{2}<0$ and $|\beta_{2}|e(H)(e(H)-1)<2$, the optimizing graphon in \eqref{MainEqn}
is uniform for any edge density $\epsilon$.
\end{proposition}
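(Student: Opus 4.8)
The plan is to show that, when $\beta_{2}<0$ and $|\beta_{2}|\,e(H)(e(H)-1)<2$, the functional
\[
F(h):=\beta_{2}\,t(H,h)-\tfrac{1}{2}\iint_{[0,1]^{2}}I(h(x,y))\,dx\,dy
\]
is \emph{strictly concave} on the convex set $\mathcal{C}_{\epsilon}$ of symmetric graphons $h\colon[0,1]^{2}\to[0,1]$ with $\iint_{[0,1]^{2}}h=\epsilon$. Once this is in hand, it suffices to check that the constant graphon $h\equiv\epsilon$, which lies in $\mathcal{C}_{\epsilon}$, is a stationary point of $F$ on $\mathcal{C}_{\epsilon}$: strict concavity then forces it to be the unique maximizer, for every $\epsilon$. (When $\epsilon\in\{0,1\}$ the set $\mathcal{C}_{\epsilon}$ consists of a single graphon and there is nothing to prove, so assume $0<\epsilon<1$.)

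For the strict concavity I would restrict $F$ to an arbitrary segment $h_{\theta}:=(1-\theta)h_{0}+\theta h_{1}$, $\theta\in[0,1]$, joining two graphons in $\mathcal{C}_{\epsilon}$ with $\|h_{0}-h_{1}\|_{L^{2}}>0$, and combine two elementary estimates. First, since $I''(t)=\tfrac{1}{t(1-t)}\ge 4$ on $(0,1)$, the function $I$ is $4$-strongly convex on $[0,1]$; integrating the pointwise strong-convexity inequality over $[0,1]^{2}$ gives
\[
\iint I(h_{\theta})\le(1-\theta)\iint I(h_{0})+\theta\iint I(h_{1})-2\,\theta(1-\theta)\,\|h_{0}-h_{1}\|_{L^{2}}^{2}.
\]
Second, $g(\theta):=t(H,h_{\theta})$ is a polynomial in $\theta$; differentiating the integrand $\prod_{\{i,j\}\in E(H)}h_{\theta}(x_{i},x_{j})$ twice in $\theta$, bounding each surviving factor by $1$ (it is a value of the graphon $h_{\theta}$), and applying $|ab|\le\tfrac{1}{2}(a^{2}+b^{2})$ to each of the $e(H)(e(H)-1)$ ordered pairs of distinct edges together with $\int_{[0,1]^{v(H)}}\bigl((h_{1}-h_{0})(x_{i},x_{j})\bigr)^{2}\,dx=\|h_{0}-h_{1}\|_{L^{2}}^{2}$, one obtains $|g''(\theta)|\le e(H)(e(H)-1)\|h_{0}-h_{1}\|_{L^{2}}^{2}$ for all $\theta\in[0,1]$. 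Because $\beta_{2}<0$, a routine one-dimensional convexity estimate then gives $\beta_{2}g(\theta)\ge(1-\theta)\beta_{2}g(0)+\theta\beta_{2}g(1)-\tfrac{|\beta_{2}|}{2}e(H)(e(H)-1)\|h_{0}-h_{1}\|_{L^{2}}^{2}\,\theta(1-\theta)$. Adding the two estimates,
\[
F(h_{\theta})\ge(1-\theta)F(h_{0})+\theta F(h_{1})+\theta(1-\theta)\Bigl(1-\tfrac{1}{2}|\beta_{2}|\,e(H)(e(H)-1)\Bigr)\|h_{0}-h_{1}\|_{L^{2}}^{2},
\]
and the coefficient in parentheses is strictly positive precisely under the hypothesis, so $F$ is strictly concave on $\mathcal{C}_{\epsilon}$.

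To identify the maximizer, fix $h\in\mathcal{C}_{\epsilon}$ with $\|h-\epsilon\|_{L^{2}}>0$ and set $\psi(\theta):=F\bigl((1-\theta)\epsilon+\theta h\bigr)$. The edge constraint $\iint(h-\epsilon)=0$ makes $\psi'(0)=0$: the entropy part contributes $-\tfrac{1}{2}I'(\epsilon)\iint(h-\epsilon)=0$ and the subgraph part contributes $\beta_{2}\,e(H)\,\epsilon^{e(H)-1}\iint(h-\epsilon)=0$. Since $\psi$ is strictly concave on $[0,1]$ and $\psi'(0)=0$, it follows that $\psi(1)<\psi(0)$; that is, $F(h)$ is strictly smaller than the value of $F$ at the constant graphon $h\equiv\epsilon$. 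Hence $h\equiv\epsilon$ is the unique optimizer in \eqref{MainEqn} for every $\epsilon$.

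Everything above is routine product-rule bookkeeping and one-variable convexity; the only point I expect to require care is the differentiation at $\theta=0$ in the previous paragraph when the competitor $h$ attains the values $0$ or $1$ on a set of positive measure, where $I'$ blows up. This is handled by observing that for $\theta$ in a sufficiently small interval $[0,\theta_{0})$ the graphon $(1-\theta)\epsilon+\theta h=\epsilon+\theta(h-\epsilon)$ takes values in a fixed compact subinterval of $(0,1)$ (using $0<\epsilon<1$ and $0\le h\le 1$), so differentiation under the integral sign is legitimate there and $\psi'(0)$ is as computed.
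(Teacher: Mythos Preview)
Your argument is correct, and it is genuinely different from the paper's. The paper proves this indirectly: it quotes Chatterjee--Diaconis to the effect that, under the same hypothesis $|\beta_{2}|e(H)(e(H)-1)<2$, the \emph{unconstrained} optimizer is uniform, and then transfers this to the constrained problem by writing $\psi(\beta_{1},\beta_{2})=\sup_{\epsilon}\{\beta_{1}\epsilon+\psi(\epsilon,\beta_{2})\}$ and varying the Lagrange multiplier $\beta_{1}$ over $\mathbb{R}$; Radin--Yin's uniqueness/monotonicity result guarantees that the resulting optimizer $\epsilon^{\ast}(\beta_{1})$ sweeps all of $(0,1)$, so every edge density is hit. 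Your proof instead attacks the constrained variational problem head-on, showing that the functional is strongly concave on $\mathcal{C}_{\epsilon}$ (via $I''\ge 4$ and your product-rule bound $|g''(\theta)|\le e(H)(e(H)-1)\|h_{0}-h_{1}\|_{L^{2}}^{2}$) and that the constant graphon is a stationary point. This is more self-contained --- it does not invoke either \cite{ChatterjeeDiaconis} or \cite{Radin} as black boxes --- and in fact it reproves the Chatterjee--Diaconis input along the way. A small bonus: your second-derivative bound never uses $\beta_{2}<0$, so the same argument shows uniformity for $0\le\beta_{2}<\tfrac{2}{e(H)(e(H)-1)}$ as well. The paper's route, on the other hand, highlights a transfer principle between the grand-canonical and canonical ensembles that is of independent interest and is reused elsewhere in the paper (see the remark following Proposition~\ref{NegativeUniform}).
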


\begin{proof}
For $\beta_{2}<0$ and $|\beta_{2}|e(H)(e(H)-1)<2$, Chatterjee and Diaconis \cite{ChatterjeeDiaconis}
proved that the optimizing graphon for the macro model is uniform, i.e., 
\begin{equation}
\psi(\beta_{1},\beta_{2})=\sup_{0\leq\epsilon\leq 1}
\left\{\beta_{1}\epsilon+\beta_{2}\epsilon^{p}-\frac{1}{2}I(\epsilon)\right\}.
\end{equation}
On the other hand,
\begin{align}
\psi(\beta_{1},\beta_{2})
&=\sup_{h(x,y)=h(y,x)}\left\{\beta_{1}e(h)+\beta_{2}t(h,H)-\frac{1}{2}\iint_{[0,1]^{2}}I(h(x,y))dxdy\right\}
\\
&=\sup_{0\leq\epsilon\leq 1}
\sup_{\substack{h(x,y)=h(y,x)
\\
e(h)=\epsilon}}\left\{\beta_{1}e(h)+\beta_{2}t(h,H)-\frac{1}{2}\iint_{[0,1]^{2}}I(h(x,y))dxdy\right\}
\nonumber
\\
&=\sup_{0\leq\epsilon\leq 1}\left\{\beta_{1}\epsilon+
\sup_{\substack{h(x,y)=h(y,x)
\\
e(h)=\epsilon}}\left\{\beta_{2}t(h,H)-\frac{1}{2}\iint_{[0,1]^{2}}I(h(x,y))dxdy\right\}\right\}
\nonumber
\\
&=\sup_{0\leq\epsilon\leq 1}
\left\{\beta_{1}\epsilon+\beta_{2}\epsilon^{p}-\frac{1}{2}I(\epsilon)\right\}
\nonumber
\\
&=\beta_{1}\epsilon^{\ast}+\beta_{2}(\epsilon^{\ast})^{p}-\frac{1}{2}I(\epsilon^{\ast}),
\nonumber
\end{align}
where $\epsilon^{\ast}$ is a maximizer of $\beta_{1}\epsilon+\beta_{2}\epsilon^{p}-\frac{1}{2}I(\epsilon)$.
Hence, we must have
\begin{equation}
\sup_{\substack{h(x,y)=h(y,x)
\\
e(h)=\epsilon^{\ast}}}\left\{\beta_{2}t(h,H)-\frac{1}{2}\iint_{[0,1]^{2}}I(h(x,y))dxdy\right\}
\leq\beta_{2}(\epsilon^{\ast})^{p}-\frac{1}{2}I(\epsilon^{\ast}),
\end{equation}
Therefore, for $(\epsilon^{\ast},\beta_{2})$, the optimizing graphon for the canonical model is uniform.
Notice that the choice of $\beta_{1}$ is arbitrary, thus, for any 
$(\epsilon,\beta_{2})$, the optimizing graphon for the canonical model is uniform if
\begin{equation}
\epsilon\in\bigcup_{\beta_{1}\in\mathbb{R}}\arg\max\left\{\beta_{1}x+\beta_{2}x^{p}-\frac{1}{2}I(x)\right\}.
\end{equation}
For any $\beta_{2}<0<\frac{p^{p-1}}{2(p-1)^{p}}$, by Proposition 3.2. and its proof in Radin and Yin \cite{Radin}, 
there is a unique
maximizer of $\beta_{1}x+\beta_{2}x^{p}-\frac{1}{2}I(x)$ and it increases
from $0$ to $1$ as $\beta_{1}$ varies from $-\infty$ to $\infty$. Therefore,
\begin{equation}
\bigcup_{\beta_{1}\in\mathbb{R}}\arg\max\left\{\beta_{1}x+\beta_{2}x^{p}-\frac{1}{2}I(x)\right\}=(0,1),
\end{equation}
and the optimizing graphon for the canonical model is uniform for any $\epsilon\in(0,1)$.
\end{proof}

\begin{remark}
For $\beta_{2}\geq 0$, Chatterjee and Diaconis \cite{ChatterjeeDiaconis} proved that
\begin{equation}
\psi(\beta_{1},\beta_{2})=\sup_{0\leq\epsilon\leq 1}\left\{\beta_{1}\epsilon+\beta_{2}\epsilon^{p}-\frac{1}{2}I(\epsilon)\right\}.
\end{equation}
Replacing $\beta_{1}$ and $\beta_{2}$ by $\frac{\beta_{1}}{2}$ and $\frac{\beta_{2}}{2}$ respectively, as
in the discussion in Proposition \ref{NegativeUniform}, for fixed $\beta_{2}$, 
the optimizing graphon is uniform if $\epsilon$ lies in the set
\begin{equation}
\bigcup_{\beta_{1}\in\mathbb{R}}\arg\max\{\beta_{1}x+\beta_{2}x^{p}-I(x)\}.
\end{equation}
>From the properties of $\beta_{1}x+\beta_{2}x^{p}-I(x)$ studied in \cite{Radin}, \cite{AristoffZhu}, \cite{AristoffZhuII},
for $\beta_{2}\leq\frac{p^{p-1}}{(p-1)^{p}}$, as $\beta_{1}$ increases from $-\infty$ to $+\infty$, the maximizer of $\beta_{1}x+\beta_{2}x^{p}-I(x)$
increases from $0$ to $1$, while for $\beta_{2}>\frac{p^{p-1}}{(p-1)^{p}}$, as $\beta_{1}$ increases from $-\infty$
to $q^{-1}(\beta_{2})$, the maximizer of $\beta_{1}x+\beta_{2}x^{p}-I(x)$ increases from $0$ to $x_{1}$,
and as $\beta_{1}$ increases from $q^{-1}(\beta_{2})$ to $\infty$, the maximizer of $\beta_{1}x+\beta_{2}x^{p}-I(x)$
increases from $x_{2}$ to $1$, where $0<x_{1}<x_{2}<1$ are the two maximizers of $\beta_{1}x+\beta_{2}x^{p}-I(x)$
for $\beta_{1}=q^{-1}(\beta_{2})$. Hence, we proved that the optimizing graphon in the canonical model
for $\beta_{2}\geq 0$ is uniform if $(\epsilon,2\beta_{2})$ is outside of the $U$-shaped region
as in Proposition \ref{PositiveUniform}.
\end{remark}

For a general simple graph $H$ satisfying some mild conditions, we proved
in Proposition \ref{PositiveUniform} and Proposition \ref{VE} that
there exists a region in the phase plane in which the optimizing graphon
is uniform and there also exists a region in which the optimizing graphon
is not uniform. In general, it seems to be difficult to give a sharp boundary
across which the optimizing graphon changes from uniform to non-uniform except
for some very special cases, e.g. along the line $\epsilon=1/2$ in Proposition \ref{OneHalfLine}.
In the spirit of Proposition \ref{PositiveUniform}
and Proposition \ref{VE}, a natural question we can ask is for fixed edge density $\epsilon$, 
whether there exists $0<\beta_{c}^{1}<\beta_{c}^{2}<\infty$
such that the optimizing graphon is uniform for $0<\beta_{2}<\beta_{c}^{1}$, non-uniform for
$\beta_{c}^{1}<\beta_{2}<\beta_{c}^{2}$ and uniform again for $\beta_{2}>\beta_{c}^{2}$.
The answer turns out to be negative.

\begin{proposition}
Fix the edge density $\epsilon$. If the optimizing graphon is non-uniform for some $\beta_{2}>0$, 
then it is non-uniform for any $\overline{\beta}_{2}>\beta_{2}$. Similarly, if
the optimizing graphon is non-uniform for some $\beta_{2}<0$, then it is non-uniform for any $\underline{\beta}_{2}<\beta_{2}$.
\end{proposition}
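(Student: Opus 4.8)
The plan is to deduce everything from convexity of $\psi(\epsilon,\cdot)$ in $\beta_{2}$ together with the sharpness of Jensen's inequality at $\beta_{2}=0$. First I would observe that, for each fixed symmetric $h:[0,1]^{2}\to[0,1]$ with $\iint_{[0,1]^{2}}h=\epsilon$, the map $\beta_{2}\mapsto\beta_{2}t(H,h)-\tfrac12\iint_{[0,1]^{2}}I(h)$ is affine in $\beta_{2}$; hence $\psi(\epsilon,\cdot)$ in \eqref{MainEqn}, being a pointwise supremum of affine functions, is convex and finite on all of $\mathbb{R}$. Next, since the constant graphon $h\equiv\epsilon$ satisfies $t(H,h)=\epsilon^{e(H)}$ and $\iint_{[0,1]^{2}}I(h)=I(\epsilon)$, the affine function $L(\beta_{2}):=\beta_{2}\epsilon^{e(H)}-\tfrac12 I(\epsilon)$ obeys $\psi(\epsilon,\beta_{2})\ge L(\beta_{2})$ for every $\beta_{2}$, with equality at a given $\beta_{2}$ precisely when $h\equiv\epsilon$ is a global maximizer in \eqref{MainEqn}. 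By convexity of $I$ and Jensen's inequality, $\psi(\epsilon,0)=-\tfrac12 I(\epsilon)=L(0)$, so equality holds at $\beta_{2}=0$.

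Set $\Phi(\beta_{2}):=\psi(\epsilon,\beta_{2})-L(\beta_{2})$. Then $\Phi$ is convex, $\Phi\ge 0$, and $\Phi(0)=0$, so the zero set $A:=\{\beta_{2}:\Phi(\beta_{2})=0\}=\{\beta_{2}:\Phi(\beta_{2})\le 0\}$ is a closed convex subset of $\mathbb{R}$ containing $0$, i.e.\ a closed interval $[\beta_{2}^{-},\beta_{2}^{+}]\ni 0$ (with endpoints possibly $\mp\infty$). Since $A$ is exactly the set of parameters for which the uniform graphon is a global maximizer, the optimizing graphon is non-uniform precisely on $\mathbb{R}\setminus A$, which is the union of the two rays $(\beta_{2}^{+},\infty)$ and $(-\infty,\beta_{2}^{-})$. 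Hence if the optimizer is non-uniform at some $\beta_{2}>0$ then $\beta_{2}>\beta_{2}^{+}\ge 0$, so every $\overline{\beta}_{2}>\beta_{2}$ lies in $(\beta_{2}^{+},\infty)$ and is also non-uniform; the case $\beta_{2}<0$ is entirely symmetric, giving the claim for $\underline{\beta}_{2}<\beta_{2}<0$.

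The only delicate point is the interpretation of ``the optimizing graphon is non-uniform'': I would read it, consistently with the earlier propositions, as ``the constant graphon $h\equiv\epsilon$ is not a global maximizer in \eqref{MainEqn}'', and with that reading the argument above is complete. For robustness one can also note, using the envelope identity $\tfrac{\partial}{\partial\beta_{2}}\psi(\epsilon,\beta_{2})=t(H,h^{\ast}_{\beta_{2}})$ at points of differentiability together with Jensen's inequality, that on the interior of $A$ the constant graphon is the \emph{unique} maximizer; thus the only conceivable ambiguity — a uniform and a non-uniform maximizer coexisting — can occur at most at the two endpoints $\beta_{2}^{\pm}$, which does not affect the statement. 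I do not anticipate a substantive obstacle: the whole content is convexity of $\psi(\epsilon,\cdot)$ plus the strict Jensen gap at $\beta_{2}=0$, and the rest is bookkeeping about sublevel sets of a convex function.
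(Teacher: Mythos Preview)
Your proof is correct, but it proceeds by a different (more structural) route than the paper. The paper argues directly with a witness: if some non-uniform $h$ beats $h\equiv\epsilon$ at $\beta_{2}>0$, rewrite this as
\[
\beta_{2}\bigl(t(H,h)-\epsilon^{e(H)}\bigr)>\tfrac12\Bigl[\iint I(h)-I(\epsilon)\Bigr],
\]
observe via Jensen that the right-hand side is nonnegative, deduce $t(H,h)>\epsilon^{e(H)}$, and conclude that the same inequality (hence the same witness $h$) persists for every $\overline{\beta}_{2}>\beta_{2}$. You instead pass to the level of the value function: $\psi(\epsilon,\cdot)$ is convex as a supremum of affine functions, so $\Phi=\psi-L$ is convex, nonnegative, and vanishes at $0$ by Jensen, forcing its zero set to be a closed interval about $0$.

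Both arguments are short and rest on the same ingredient (Jensen at $\beta_{2}=0$). The paper's witness argument is slightly more hands-on and yields the extra information that a \emph{single} non-uniform $h$ continues to beat the constant graphon for all larger parameters. Your convexity argument is cleaner in that it immediately gives the global shape of the ``uniform'' region as an interval $[\beta_{2}^{-},\beta_{2}^{+}]$ containing $0$, and it would extend verbatim to any model where $\psi$ is a supremum of functions affine in the parameter. Your closing remarks about possible coexistence of uniform and non-uniform maximizers at the endpoints $\beta_{2}^{\pm}$ are apt and match the paper's implicit interpretation of ``non-uniform'' as ``the constant graphon is not a global maximizer''.
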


\begin{proof}
With loss of generality, we consider the case $\beta_{2}>0$. There exists a non-uniform graphon $h$ such that
\begin{equation}
\beta_{2}t(h,H)-\frac{1}{2}\iint_{[0,1]^{2}}I(h(x,y))dxdy
>\beta_{2}\epsilon^{e(H)}-\frac{1}{2}I(\epsilon).
\end{equation}
This is equivalent to
\begin{equation}
\beta_{2}(t(h,H)-\epsilon^{e(H)})\iint_{[0,1]^{2}}I(h(x,y))dxdy
>\frac{1}{2}\left[\iint_{[0,1]^{2}}I(h(x,y))dxdy-I(\epsilon)\right].
\end{equation}
By Jensen's inequality, $\iint_{[0,1]^{2}}I(h(x,y))dxdy\geq I(\epsilon)$.
Since $\beta_{2}>0$, we get $t(h,H)>\epsilon^{e(H)}$.
Therefore, for any $\overline{\beta}_{2}>\beta_{2}$, we have
\begin{equation}
\overline{\beta}_{2}(t(h,H)-\epsilon^{e(H)})\iint_{[0,1]^{2}}I(h(x,y))dxdy
>\frac{1}{2}\left[\iint_{[0,1]^{2}}I(h(x,y))dxdy-I(\epsilon)\right].
\end{equation}
Thus, the optimizer cannot be uniform at $\overline{\beta}_{2}$.
\end{proof}

\section{Asymptotic Structure for Edge-Star Model}\label{EdgeStarSection}

In Proposition \ref{PositiveUniform}, we proved uniform structure
of the constrained exponential random graph model for very general simple finite graph $H$.
The results in Proposition \ref{PositiveUniform} are restricted to non-negative $\beta_{2}$.
We will show in the following result that for the edge-star model, the uniform structure
holds for any negative $\beta_{2}$.

\begin{proposition}\label{StarUniform}
When $H$ is a $p$-star, there exists a $U$-shaped region as defined in Theorem \ref{UThm} such that 
the optimizing graphon in \eqref{MainEqn} is uniform
for any $(\epsilon,2\beta_{2})$ outside this $U$-shaped region.
\end{proposition}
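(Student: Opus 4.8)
The plan is to reduce the two-dimensional variational problem \eqref{MainEqn} to the one-dimensional problem analyzed in Theorem \ref{UThm} and then read off the conclusion. The feature of the $p$-star that makes this work, and that lets us include negative $\beta_2$ (unlike Proposition \ref{PositiveUniform}), is that for a $p$-star $H$ the homomorphism density depends only on the degree function: writing $g_h(x):=\int_0^1 h(x,y)\,dy$, one has $t(H,h)=\int_0^1 g_h(x)^p\,dx$, an \emph{equality} valid for every symmetric graphon $h$, so no generalized H\"older step — and hence no sign restriction on $\beta_2$ — is needed.

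First I would fix a symmetric graphon $h$ with $\iint_{[0,1]^2}h\,dxdy=\epsilon$ and apply Jensen's inequality to the convex function $I$ in the inner integral: $\int_0^1 I(h(x,y))\,dy\ge I\bigl(\int_0^1 h(x,y)\,dy\bigr)=I(g_h(x))$, hence $\iint_{[0,1]^2}I(h)\,dxdy\ge\int_0^1 I(g_h(x))\,dx$. Since $\int_0^1 g_h=\epsilon$ and $0\le g_h\le 1$, the function $g_h$ is admissible for the one-dimensional problem, and therefore
\begin{align*}
\psi(\epsilon,\beta_2)&\le\sup_{g:[0,1]\rightarrow[0,1],\,\int_0^1 g=\epsilon}\Bigl\{\beta_2\int_0^1 g(x)^p\,dx-\tfrac12\int_0^1 I(g(x))\,dx\Bigr\}\\
&=\tfrac12\sup_{g:[0,1]\rightarrow[0,1],\,\int_0^1 g=\epsilon}\Bigl\{2\beta_2\int_0^1 g(x)^p\,dx-\int_0^1 I(g(x))\,dx\Bigr\}.
\end{align*}
Now I would invoke Theorem \ref{UThm} with parameter $2\beta_2$ in place of $\beta_2$: when $(\epsilon,2\beta_2)$ lies outside the $U$-shaped region $U_\epsilon$, the one-dimensional optimizer is $g\equiv\epsilon$, so the last supremum equals $2\beta_2\epsilon^p-I(\epsilon)$ and thus $\psi(\epsilon,\beta_2)\le\beta_2\epsilon^p-\tfrac12 I(\epsilon)$. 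In particular, since $\beta_2^c>0$, every $\beta_2\le 0$ gives $2\beta_2\le 0<\beta_2^c$, hence $(\epsilon,2\beta_2)\in U_\epsilon^c$ for all $\epsilon$, so this bound holds throughout the repulsive regime.

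Conversely, testing \eqref{MainEqn} against the constant graphon $h\equiv\epsilon$ gives $\psi(\epsilon,\beta_2)\ge\beta_2\epsilon^p-\tfrac12 I(\epsilon)$, so equality holds and $h\equiv\epsilon$ is optimal. To conclude that \emph{every} optimizer is uniform, I would note that if $h^\ast$ is optimal then the chain of inequalities above must be tight; in particular $\iint_{[0,1]^2}I(h^\ast)\,dxdy=\int_0^1 I(g_{h^\ast}(x))\,dx$, which forces equality in Jensen, i.e.\ $h^\ast(x,y)=g_{h^\ast}(x)$ for a.e.\ $(x,y)$. By symmetry of $h^\ast$ this means $g_{h^\ast}(x)=g_{h^\ast}(y)$ a.e., so $g_{h^\ast}\equiv\epsilon$ and $h^\ast\equiv\epsilon$. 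The only delicate points I anticipate are bookkeeping ones — matching the entropy coefficient $\tfrac12$ of \eqref{MainEqn} against the coefficient $1$ in Theorem \ref{UThm}, which is precisely what produces the $2\beta_2$ in the statement, and checking that Theorem \ref{UThm} is genuinely available for negative values of its parameter, which it is since the entire lower half-plane lies in $U_\epsilon^c$. Beyond Jensen's inequality and the quoted Aristoff--Zhu result, no further analysis is required.
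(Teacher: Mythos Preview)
Your proof is correct and follows essentially the same route as the paper's: reduce the two-dimensional problem to the one-dimensional one of Theorem~\ref{UThm} via Jensen's inequality applied to $I$, then read off the uniform optimizer outside the $U$-shaped region with parameter $2\beta_2$. Your additional paragraph showing that \emph{every} optimizer must be uniform (via equality in Jensen plus symmetry) is a nice touch that the paper does not spell out.
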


\begin{proof}
For the $p$-star model,
\begin{equation}
\psi(\epsilon,\beta_{2})=\sup_{\substack{\int_{0}^{1}\int_{0}^{1}h(x,y)dxdy=\epsilon
\\
h(x,y)=h(y,x)}}
\left\{\beta_{2}\int_{0}^{1}\left(\int_{0}^{1}h(x,y)dy\right)^{p}dx
-\frac{1}{2}\int_{0}^{1}\int_{0}^{1}I(h(x,y))dxdy\right\}.
\end{equation}
Since $x\mapsto I(x)$ is convex, Jensen's inequality implies that
\begin{align}
&\psi(\epsilon,\beta_{2})
\\
&\leq\sup_{\substack{\int_{0}^{1}\int_{0}^{1}h(x,y)dxdy=\epsilon
\\
h(x,y)=h(y,x)}}
\left\{\beta_{2}\int_{0}^{1}\left(\int_{0}^{1}h(x,y)dy\right)^{p}dx
-\frac{1}{2}\int_{0}^{1}I\left(\int_{0}^{1}h(x,y)dy\right)dx\right\}
\nonumber
\\
&\leq\sup_{\int_{0}^{1}\int_{0}^{1}h(x,y)dxdy=\epsilon}
\left\{\beta_{2}\int_{0}^{1}\left(\int_{0}^{1}h(x,y)dy\right)^{p}dx
-\frac{1}{2}\int_{0}^{1}I\left(\int_{0}^{1}h(x,y)dy\right)dx\right\}
\nonumber
\\
&=\frac{1}{2}\sup_{\int_{0}^{1}g(x)dx=\epsilon}
\left\{2\beta_{2}\int_{0}^{1}g(x)^{p}dx
-\int_{0}^{1}I(g(x))dx\right\}.
\nonumber
\end{align}
It was proved in \cite{AristoffZhuII} that for $(\epsilon,2\beta_{2})$ outside of a $U$-shaped region,
the optimal $g$ is uniform, i.e., $g(x)\equiv\epsilon$. 
On the other hand, it's clear that $\psi(\epsilon,\beta_{2})\geq\beta_{2}\epsilon^{p}-\frac{1}{2}I(\epsilon)$.
Therefore, the optimizer is uniform outside of a $U$-shaped region.
\end{proof}

\begin{remark}
For the $p$-star model, for $\beta_{2}\leq 0$, by Jensen's inequality
\begin{align}
\psi(\epsilon,\beta_{2})&\leq\sup_{\substack{\int_{0}^{1}\int_{0}^{1}h(x,y)dxdy=\epsilon
\\
h(x,y)=h(y,x)}}
\left\{\beta_{2}\left(\int_{0}^{1}\int_{0}^{1}h(x,y)dydx\right)^{p}
-\frac{1}{2}I\left(\int_{0}^{1}\int_{0}^{1}h(x,y)dxdy\right)\right\}
\\
&=\beta_{2}\epsilon^{p}-\frac{1}{2}I(\epsilon).\nonumber
\end{align}
Together with Proposition \ref{PositiveUniform}, we recover the conclusion in Proposition \ref{StarUniform}.
\end{remark}

In a very recent paper by Kenyon et al. \cite{Kenyon}, they proved a remarkable
result that for the micro-canonical edge-star model, i.e., the model defined in \eqref{MicroModel}
for $H$ being a $p$-star, the optimizing graphon is always multipodal.
Following their argument, it is easy to see that when $H$ is a $p$-star, for the constrained
exponential random graph model \eqref{MainModelI}, \eqref{MainModelII}, the optimizing graphon
is always multipodal. Unlike the micro-canonical model, the parameter $\beta_{2}$
is given for the constrained exponential random graph model. Therefore, there is a need
to make the parameter $\beta_{2}$ more transparent in the Euler-Lagrange equation etc. which will
be used in the proof of Proposition \ref{OneHalfLine}. 

\begin{proposition}\label{Multi}
When $H$ is a $p$-star in the constrained exponential random graph model \eqref{MainModelI},\eqref{MainModelII}, 
the optimizing graphon in \eqref{MainEqn} is multipodal.
\end{proposition}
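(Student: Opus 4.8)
The plan is to adapt the variational argument of Kenyon et al. \cite{Kenyon} for the micro-canonical $p$-star model to the canonical setting, making the dependence on $\beta_{2}$ explicit. First I would write down the Euler--Lagrange equation for the optimization problem \eqref{MainEqn} when $H$ is a $p$-star. Writing $g(x):=\int_{0}^{1}h(x,y)\,dy$ for the degree function, the functional to be maximized is
\begin{equation*}
F(h)=\beta_{2}\int_{0}^{1}g(x)^{p}\,dx-\frac{1}{2}\iint_{[0,1]^{2}}I(h(x,y))\,dx\,dy,
\end{equation*}
subject to the symmetry constraint $h(x,y)=h(y,x)$ and the edge constraint $\iint h=\epsilon$. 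Introducing a Lagrange multiplier $\mu$ for the edge constraint and varying $h$, the stationarity condition at an optimizer $h^{\ast}$ reads, for almost every $(x,y)$ with $0<h^{\ast}(x,y)<1$,
\begin{equation*}
p\beta_{2}\bigl(g^{\ast}(x)^{p-1}+g^{\ast}(y)^{p-1}\bigr)-\tfrac{1}{2}\bigl(I'(h^{\ast}(x,y))+I'(h^{\ast}(x,y))\bigr)=\mu,
\end{equation*}
which, after using $I'(u)=\log\frac{u}{1-u}$, can be solved to give
\begin{equation*}
h^{\ast}(x,y)=\frac{e^{p\beta_{2}(g^{\ast}(x)^{p-1}+g^{\ast}(y)^{p-1})-\mu}}{1+e^{p\beta_{2}(g^{\ast}(x)^{p-1}+g^{\ast}(y)^{p-1})-\mu}},
\end{equation*}
and this formula extends to the boundary values $0$ and $1$ as the appropriate limits. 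The key structural observation is that $h^{\ast}(x,y)$ depends on $(x,y)$ only through the pair of values $\bigl(g^{\ast}(x),g^{\ast}(y)\bigr)$, so $h^{\ast}$ is constant on each product set $g^{\ast -1}(\{a\})\times g^{\ast -1}(\{b\})$.

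The second step is to show that the degree function $g^{\ast}$ takes only finitely many values, which then forces $h^{\ast}$ to be multipodal (constant on the blocks of a finite partition of $[0,1]$ determined by the level sets of $g^{\ast}$). Plugging the explicit form of $h^{\ast}$ back into $g^{\ast}(x)=\int_{0}^{1}h^{\ast}(x,y)\,dy$ shows that $g^{\ast}(x)=\Phi(g^{\ast}(x))$, where $\Phi$ is a fixed real-analytic function built from $g^{\ast}$ (it is $\int_{0}^{1}$ of the logistic expression above, viewed as a function of the single real variable $t=g^{\ast}(x)$, with the rest integrated out). Thus every value $a$ in the range of $g^{\ast}$ is a solution of the equation $a=\Phi(a)$; since $\Phi-\mathrm{id}$ is real-analytic and not identically zero (its derivative involves the nondegenerate logistic kernel), it has finitely many zeros on $[0,1]$, hence $g^{\ast}$ has finite range. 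This is exactly the mechanism of \cite{Kenyon}, and the only adaptation needed is carrying the constant $\beta_{2}$ — and the multiplier $\mu$ absorbing the edge constraint — through the computation, which is the content of the remark preceding the proposition.

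The main obstacle I anticipate is the regularity issue: a priori an optimizer $h^{\ast}$ is only a measurable graphon, so the Euler--Lagrange identity holds only almost everywhere, and one must be careful that the "finitely many values of $g^{\ast}$'' argument is not spoiled by null sets — in particular one needs that $g^{\ast}$, which is automatically Lipschitz (indeed $1$-Lipschitz in a suitable sense after a monotone rearrangement), genuinely satisfies the fixed-point relation $a=\Phi(a)$ for every $a$ in its essential range, and that $\Phi$ is the \emph{same} function for all $a$. Following \cite{Kenyon} one handles this by first rearranging so that $g^{\ast}$ is monotone non-decreasing, noting an optimizer must have $h^{\ast}$ equal a.e.\ to a function of $(g^{\ast}(x),g^{\ast}(y))$, and then checking the analyticity of $\Phi$ and the finiteness of its fixed-point set. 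A secondary point to verify is that the supremum in \eqref{MainEqn} is actually attained — this follows from upper semicontinuity of the entropy term and continuity of the homomorphism density in the cut metric together with compactness of the space of graphons \cite{Lov}, as already invoked in the remark after Proposition \ref{Limit}. Once attainment and the Euler--Lagrange structure are in hand, multipodality is immediate.
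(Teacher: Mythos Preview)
Your proposal is correct and follows essentially the same route as the paper: introduce a Lagrange multiplier for the edge constraint, derive the Euler--Lagrange equation to express the optimizer $h^{\ast}(x,y)$ as a logistic function of $g^{\ast}(x)^{p-1}+g^{\ast}(y)^{p-1}$, integrate in $y$ to obtain a fixed-point equation $F(z)=0$ whose roots are the values of $g^{\ast}$, and then invoke the analyticity argument of Kenyon et al.~\cite{Kenyon} to conclude that $g^{\ast}$ has finite range. The paper's proof is terser (it simply defers to \cite{Kenyon} after writing down $F$), whereas you spell out the mechanism and flag the regularity and attainment issues that the paper leaves implicit; substantively the arguments coincide.
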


\begin{proof}
Let us introduce the Lagrange multiplier $\beta_{1}$ and define
\begin{align}
\Lambda(h)&:=\beta_{2}\int_{0}^{1}\left(\int_{0}^{1}h(x,y)dy\right)^{p}dx
+\beta_{1}\left(\epsilon-\int_{0}^{1}\int_{0}^{1}h(x,y)dxdy\right)
\\
&\qquad\qquad\qquad\qquad
-\frac{1}{2}\int_{0}^{1}\int_{0}^{1}I(h(x,y))dxdy.
\nonumber
\end{align}
Consider symmetric $\eta(x,y)=\eta(y,x)$ and set equal to zero
the derivative with respect to $\varepsilon$
\begin{equation}
\frac{d}{d\varepsilon}\bigg|_{\varepsilon=0}\Lambda(h+\varepsilon\eta)=0.
\end{equation}
Thus, we get
\begin{equation}
2\beta_{1}-\beta_{2}pg^{p-1}(x)-\beta_{2}pg^{p-1}(y)=\log\left(\frac{1-h(x,y)}{h(x,y)}\right),
\end{equation}
where $g(x):=\int_{0}^{1}h(x,y)dy$. Rearranging the equation and integrating over $y$,
\begin{equation}
g(x)=\int_{0}^{1}\frac{dy}{1+e^{2\beta_{1}-\beta_{2}pg^{p-1}(x)-\beta_{2}pg^{p-1}(y)}}.
\end{equation}
The values of $g(x)$ are therefore the roots of
\begin{equation}\label{FDefn}
F(z):=z-\int_{0}^{1}\frac{dy}{1+e^{2\beta_{1}-\beta_{2}pz^{p-1}-\beta_{2}pg^{p-1}(y)}}.
\end{equation}
Following the same arguments in the proof of Theorem 3.4. in Kenyon et al. \cite{Kenyon}, the optimizer is multipodal.
\end{proof}

\section{Two-Star Model}\label{TwoStarSection}

In this section, we study in details the more refined properties when the given graph $H$ is a two-star.
In particular, we will show that $U$-shaped region is not optimal, and will give a sharp result
along the line $\epsilon=1/2$, as well as giving a stationary point. Phase transitions will also be discussed.

Unlike the constrained exponential random graph models for directed graphs, see Aristoff and Zhu \cite{AristoffZhuII},
the $U$-shaped region for undirected graphs is not optimal, in the sense that 
inside the $U$-shaped region, the optimal graphon can still be uniform,
which can be seen from the sharp result along the line $\epsilon=1/2$ in Proposition \ref{OneHalfLine}.
For an illustration, we refer to Figure 1.

\begin{proposition}\label{TwoStar}
When $H$ is a two-star, the optimizing graphon in \eqref{MainEqn} is not uniform if $\beta_{2}>\frac{1}{2\epsilon(1-\epsilon)}$.
\end{proposition}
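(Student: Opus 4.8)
The plan is to show that the only uniform graphon compatible with the constraint $\iint_{[0,1]^2}h=\epsilon$, namely $h\equiv\epsilon$, fails to be a local maximizer of the functional in \eqref{MainEqn} as soon as $\beta_{2}>\frac{1}{2\epsilon(1-\epsilon)}$, and then to conclude that the actual maximizer must be non-uniform. We may assume $0<\epsilon<1$, since for $\epsilon\in\{0,1\}$ the asserted bound is vacuous. For the two-star, $t(H,h)=\int_{0}^{1}\big(\int_{0}^{1}h(x,y)\,dy\big)^{2}dx$, so at $h\equiv\epsilon$ the objective equals $\beta_{2}\epsilon^{2}-\tfrac12 I(\epsilon)$.

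First I would introduce a one-parameter family of admissible competitors. Fix a bounded, non-(a.e.)constant function $\phi:[0,1]\to\mathbb{R}$ with $\int_{0}^{1}\phi(x)\,dx=0$ (for instance $\phi=\mathbf 1_{[0,1/2)}-\mathbf 1_{[1/2,1]}$), and set $h_{s}(x,y):=\epsilon+s\big(\phi(x)+\phi(y)\big)$. This $h_{s}$ is symmetric, takes values in $[0,1]$ for $|s|$ small (using $0<\epsilon<1$ and boundedness of $\phi$), and satisfies $\iint h_{s}=\epsilon+2s\int\phi=\epsilon$; hence $\psi(\epsilon,\beta_{2})\ge \beta_{2}t(H,h_{s})-\tfrac12\iint I(h_{s})$. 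Writing $g_{s}(x)=\int_{0}^{1}h_{s}(x,y)\,dy=\epsilon+s\phi(x)$ and using $\int\phi=0$ gives $\int_{0}^{1}g_{s}(x)^{2}\,dx=\epsilon^{2}+s^{2}\int_{0}^{1}\phi^{2}$. For the entropy term, Taylor-expand $I(\epsilon+s\eta)$ with $\eta(x,y)=\phi(x)+\phi(y)$: the linear term drops because $\iint\eta=0$, and with $I''(\epsilon)=\tfrac{1}{\epsilon(1-\epsilon)}$ and $\iint\eta^{2}=2\int\phi^{2}$ one obtains $\iint I(h_{s})=I(\epsilon)+\tfrac{s^{2}}{\epsilon(1-\epsilon)}\int\phi^{2}+O(s^{3})$. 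Combining the two pieces,
\[
\beta_{2}t(H,h_{s})-\tfrac12\iint I(h_{s})
=\beta_{2}\epsilon^{2}-\tfrac12 I(\epsilon)
+s^{2}\Big(\beta_{2}-\tfrac{1}{2\epsilon(1-\epsilon)}\Big)\int_{0}^{1}\phi(x)^{2}\,dx+O(s^{3}).
\]

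To finish, note that $\phi$ non-constant with zero mean gives $\int_{0}^{1}\phi^{2}>0$, so if $\beta_{2}>\frac{1}{2\epsilon(1-\epsilon)}$ the $s^{2}$-coefficient is strictly positive; hence for small $s\neq0$, $\beta_{2}t(H,h_{s})-\tfrac12\iint I(h_{s})>\beta_{2}\epsilon^{2}-\tfrac12 I(\epsilon)$, i.e. $\psi(\epsilon,\beta_{2})$ strictly exceeds the value attained by the uniform graphon. Since the supremum in \eqref{MainEqn} is attained (the space of reduced graphons is compact in the cut metric and both $t(H,\cdot)$ and the entropy term are continuous), the maximizer cannot be uniform. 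I do not anticipate a genuine obstacle here: the only points requiring care are the admissibility of $h_{s}$ for small $s$ (immediate) and the second-order bookkeeping in the expansion above; the strict inequality already shows $h\equiv\epsilon$ is not optimal even without appealing to attainment.
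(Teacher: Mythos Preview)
Your argument is correct and actually cleaner than the paper's. The paper works with a two-block competitor $h_{\alpha,\delta,\eta}$ taking the value $\epsilon+\delta$ on both diagonal blocks $[0,\alpha]^{2}\cup[\alpha,1]^{2}$ and $\epsilon-\eta$ off the diagonal, with $[\alpha^{2}+(1-\alpha)^{2}]\delta=2\alpha(1-\alpha)\eta$ to keep the edge density fixed; it then expands to second order in $(\delta,\eta)$ and afterwards has to minimize the resulting ratio over the block parameter $\alpha$, finding the infimum at $\alpha\in\{0,1\}$ to extract the threshold $\frac{1}{2\epsilon(1-\epsilon)}$. You instead perturb along an additive rank-one direction $h_{s}=\epsilon+s(\phi(x)+\phi(y))$; because $g_{s}(x)=\epsilon+s\phi(x)$ and $\iint(\phi(x)+\phi(y))^{2}=2\int\phi^{2}$, the second-order coefficient $\big(\beta_{2}-\tfrac{1}{2\epsilon(1-\epsilon)}\big)\int\phi^{2}$ falls out immediately with no optimization step. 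Your perturbation with $\phi=\mathbf 1_{[0,1/2)}-\mathbf 1_{[1/2,1]}$ is still bipodal but is \emph{asymmetric} in the two diagonal blocks, which is why it escapes the degeneracy the paper encounters at $\alpha=1/2$. The paper's route has the minor advantage of staying inside the class of graphons that are constant on both diagonal blocks (closer in spirit to the multipodal structure established later), but your approach reaches the same sharp constant with less bookkeeping. One small correction: the entropy term is only upper semicontinuous in the cut metric, not continuous, but as you already note, the strict improvement over $h\equiv\epsilon$ suffices without invoking attainment.
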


\begin{proof}
For the two-star model,
\begin{equation}
\psi(\epsilon,\beta_{2})=\sup_{\substack{\int_{0}^{1}\int_{0}^{1}h(x,y)dxdy=\epsilon
\\
h(x,y)=h(y,x)}}
\left\{\beta_{2}\int_{0}^{1}\left(\int_{0}^{1}h(x,y)dy\right)^{2}dx
-\frac{1}{2}\int_{0}^{1}\int_{0}^{1}I(h(x,y))dxdy\right\}.
\end{equation}
Let us define
\begin{equation}
h_{\alpha,\delta,\eta}(x,y)=
\begin{cases}
\epsilon+\delta &\text{if $0<x,y<\alpha$ or $\alpha<x,y<1$}
\\
\epsilon-\eta &\text{otherwise}
\end{cases}.
\end{equation}
To satisfy the constraint $\iint_{[0,1]^{2}}h_{\alpha,\delta,\eta}(x,y)dxdy=\epsilon$, we need to impose the condition
\begin{equation}\label{relation}
[\alpha^{2}+(1-\alpha)^{2}]\delta=2\alpha(1-\alpha)\eta.
\end{equation}
It is straightforward to compute that
\begin{align}
&\int_{0}^{1}\left(\int_{0}^{1}h_{\alpha,\delta,\eta}(x,y)dy\right)^{2}dx-\int_{0}^{1}\left(\int_{0}^{1}\epsilon dy\right)^{2}dx
\\
&=[\epsilon+(1-\alpha)\delta-\alpha\eta]^{2}(1-\alpha)
+[\epsilon+\delta\alpha-(1-\alpha)\eta]^{2}\alpha-\epsilon^{2}
\nonumber
\\
&=[(1-\alpha)\delta-\alpha\eta]^{2}(1-\alpha)
+[\delta\alpha-(1-\alpha)\eta]^{2}\alpha.
\nonumber
\end{align}
Notice the last line above is strictly positive if $\alpha\neq\frac{1}{2}$. Therefore,
for $\beta_{2}$ sufficiently large, $h=h_{\alpha,\delta,\eta}$ is more optimal than $h\equiv\epsilon$
and the optimizer is therefore not uniform.

Indeed, the optimizer is not uniform if
\begin{equation}
\beta_{2}\geq\frac{[\alpha^{2}+(1-\alpha)^{2}]\frac{1}{2}I(\epsilon+\delta)
+2\alpha(1-\alpha)\frac{1}{2}I(\epsilon-\eta)
-\frac{1}{2}I(\epsilon)}{[(1-\alpha)\delta-\alpha\eta]^{2}(1-\alpha)
+[\delta\alpha-(1-\alpha)\eta]^{2}\alpha}.
\end{equation}
For $\delta,\eta$ sufficiently small and use \eqref{relation},
the optimizer is not uniform if
\begin{equation}
\beta_{2}\geq\frac{\frac{1}{4}I''(\epsilon)[\alpha^{2}+(1-\alpha)^{2}]\delta(\delta+\eta)+O(\delta^{3})}{[(1-\alpha)\delta-\alpha\eta]^{2}(1-\alpha)
+[\delta\alpha-(1-\alpha)\eta]^{2}\alpha}.
\end{equation}
Fix $\alpha$ and let $\delta,\eta\rightarrow 0$ and again use \eqref{relation}, the optimizer is not uniform if
\begin{align}
\beta_{2}&>
\frac{\frac{1}{4}I''(\epsilon)[\alpha^{2}+(1-\alpha)^{2}](1+\frac{\alpha^{2}+(1-\alpha)^{2}}{2\alpha(1-\alpha)})}
{(1-\alpha-\frac{\alpha^{2}+(1-\alpha)^{2}}{2(1-\alpha)})^{2}(1-\alpha)
+(\alpha-\frac{\alpha^{2}+(1-\alpha)^{2}}{2\alpha})^{2}\alpha}
\\
&=\frac{1}{4\epsilon(1-\epsilon)}\frac{2[\alpha^{2}+(1-\alpha)^{2}]}{(1-2\alpha)^{2}}
\nonumber
\\
&=\frac{1}{2\epsilon(1-\epsilon)}\frac{1+2\alpha^{2}-2\alpha}{(1-2\alpha)^{2}}.
\nonumber
\end{align}
It is easy to check that the minimum of $\frac{1+2\alpha^{2}-2\alpha}{(1-2\alpha)^{2}}$
is achieved at $\alpha=\{0,1\}$. Therefore, the optimizer is not uniform if $\beta_{2}>\frac{1}{2\epsilon(1-\epsilon)}$.
\end{proof}

\begin{remark}\label{UNotOptimal}
If $H$ is a two-star, by Proposition \ref{TwoStar} and Proposition \ref{VE}, the optimizer is not uniform if
\begin{align}
\beta_{2}&>\frac{1}{2\epsilon}\min\left\{\frac{1}{1-\epsilon},\frac{-\epsilon\log\epsilon-(1-\epsilon)\log(1-\epsilon)}
{\sqrt{\epsilon}-\epsilon}\right\}
\\
&=\frac{1}{2\epsilon^{3/2}(1-\sqrt{\epsilon})}
\min\left\{\frac{\sqrt{\epsilon}}{1+\sqrt{\epsilon}},-\epsilon\log\epsilon-(1-\epsilon)\log(1-\epsilon)\right\}.
\nonumber
\end{align}
It is easy to compute that when $\epsilon$ is close to $1$, $\frac{1}{2\epsilon^{3/2}(1-\sqrt{\epsilon})}\{-\epsilon\log\epsilon-(1-\epsilon)\log(1-\epsilon)\}$ gives
a better lower bound for $\beta_{2}$ and when $\epsilon$ is close to $1/2$,
$\frac{1}{2\epsilon^{3/2}(1-\sqrt{\epsilon})}\frac{\sqrt{\epsilon}}{1+\sqrt{\epsilon}}$
gives a better lower bound for $\beta_{2}$.
We illustrate the lower bounds in Proposition \ref{VE} and Proposition \ref{TwoStar},
and also the $U$-shaped region in Figure 1.
\end{remark}

\begin{figure}
\begin{center}
\hskip-25pt\includegraphics[scale=0.75]{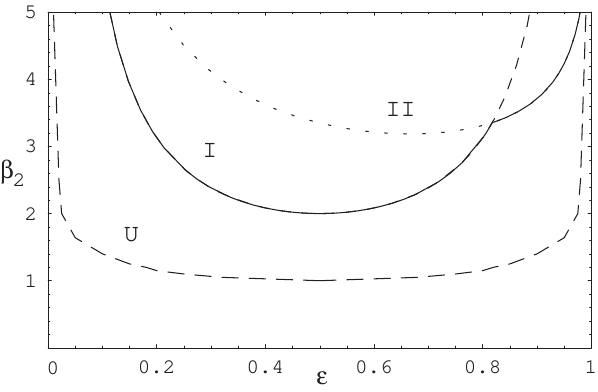}
\end{center}
\caption{An illustration of the lower bound (curve I) given in Proposition \ref{TwoStar}, 
the lower bound (curve II) given in Proposition \ref{VE} and the $U$-shaped region
for the two-star model.}
\end{figure}

\subsection{Along $\epsilon=1/2$ Line}

In general, we proved that there exists some critical number $\beta_{2}^{c}>0$ 
such that the optimizing graphon is uniform for any $0<\beta_{2}<\beta_{2}^{c}$ and non-uniform
for any $\beta_{2}>\beta_{2}^{c}$. But we are far from determining the exact value
of $\beta_{2}^{c}$. For very special case, the two-star model along the line $\epsilon=1/2$,
we can show that $\beta_{2}^{c}=2$. 

\begin{proposition}\label{OneHalfLine}
For the two-star model, along the line $\epsilon=1/2$, the optimizing graphon in \eqref{MainEqn} 
is uniform if $\beta_{2}\leq 2$
and it is not if $\beta_{2}>2$.
\end{proposition}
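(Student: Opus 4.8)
The plan is to work with the reduced variational problem for the two-star, namely to maximize
$\Phi(h):=\beta_2\int_0^1\bigl(\int_0^1 h(x,y)\,dy\bigr)^2dx-\tfrac12\iint I(h)$
over symmetric $h:[0,1]^2\to[0,1]$ with $\iint h=\tfrac12$, and to show that the uniform graphon $h\equiv\tfrac12$ is the global maximizer precisely when $\beta_2\le 2$. The ``not uniform if $\beta_2>2$'' direction is already essentially contained in Proposition~\ref{TwoStar} (take $\epsilon=\tfrac12$: the bound $\beta_2>\tfrac1{2\epsilon(1-\epsilon)}$ becomes $\beta_2>2$), so the real content is the upper bound: for $\beta_2\le 2$ the uniform graphon beats every competitor. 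First I would record the symmetry of the problem about the value $\tfrac12$ that makes $\epsilon=\tfrac12$ special: writing $h(x,y)=\tfrac12+f(x,y)$ with $f$ symmetric, $\iint f=0$, $|f|\le\tfrac12$, the entropy term expands as $\tfrac12\iint I(\tfrac12+f)=-\tfrac12\log 2+\iint\sum_{m\ge1}\tfrac{(2f)^{2m}}{2m(2m-1)}$, an even functional of $f$, while the two-star term is $\beta_2\int_0^1\bigl(\int_0^1 f(x,y)\,dy\bigr)^2dx$ (the cross term $\int 2\cdot\tfrac12\int f\,dy\,dx$ vanishes by $\iint f=0$, and the constant $\tfrac14$ is common to all competitors). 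So after dropping constants we must show
$\beta_2\int_0^1\Bigl(\int_0^1 f(x,y)\,dy\Bigr)^2dx\le \iint\sum_{m\ge1}\frac{(2f)^{2m}}{2m(2m-1)}\,dxdy$
for all admissible $f$ when $\beta_2\le 2$, with equality only at $f\equiv 0$.

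The key estimate is that the right side is at least $\iint 2f^2=2\iint f^2$ (keep only the $m=1$ term, $\tfrac{(2f)^2}{2}=2f^2$, and discard the rest, which is nonnegative), so it suffices to prove
$\beta_2\int_0^1\bigl(\int_0^1 f(x,y)\,dy\bigr)^2dx\le 2\iint_{[0,1]^2} f(x,y)^2\,dxdy$,
i.e. exactly $\beta_2\le 2$ times a Cauchy–Schwarz/Jensen inequality: for each fixed $x$, $\bigl(\int_0^1 f(x,y)\,dy\bigr)^2\le\int_0^1 f(x,y)^2\,dy$, and integrating in $x$ gives $\int_0^1(\int f\,dy)^2dx\le\iint f^2$. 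Thus for $\beta_2\le 2$,
$\beta_2\int_0^1\Bigl(\int_0^1 f\,dy\Bigr)^2dx\le 2\int_0^1\Bigl(\int_0^1 f\,dy\Bigr)^2dx\le 2\iint f^2\le \iint\sum_{m\ge1}\frac{(2f)^{2m}}{2m(2m-1)},$
which is the desired inequality, so $\Phi(h)\le\Phi(\tfrac12)$ and the uniform graphon is optimal. For the strict statement at $\beta_2=2$ one checks the equality cases: the Jensen step is an equality only if $f(x,\cdot)$ is a.e.\ constant in $y$ for a.e.\ $x$, and the discarded higher-order entropy terms vanish only if $f\equiv0$ a.e.; hence $f\equiv0$ is the unique maximizer for $\beta_2\le 2$.

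For the complementary direction ($\beta_2>2$) I would simply invoke Proposition~\ref{TwoStar} at $\epsilon=\tfrac12$, or, to be self-contained, exhibit the explicit two-valued competitor $h=\tfrac12+f$ with $f(x,y)=\delta$ on $(0,\alpha)^2\cup(\alpha,1)^2$ and $f(x,y)=-\eta$ otherwise, subject to $[\alpha^2+(1-\alpha)^2]\delta=2\alpha(1-\alpha)\eta$, let $\delta,\eta\to0$ with $\alpha\to0$, and read off from the second-order expansion in Proposition~\ref{TwoStar} that the two-star gain $\beta_2\cdot\bigl(\text{positive }O(\delta^2)\bigr)$ exceeds the entropy cost $\tfrac14 I''(\tfrac12)\cdot O(\delta^2)=\tfrac12\,O(\delta^2)$ precisely once $\beta_2$ exceeds the infimal ratio, which is $2$ at $\epsilon=\tfrac12$. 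The main obstacle is not any single computation but making sure the series manipulation and the discarding of the tail $\sum_{m\ge2}$ are rigorously justified and that the equality analysis is airtight at $\beta_2=2$; everything else is the elementary two-line Cauchy–Schwarz argument above, which is why the clean threshold $\beta_2^c=2$ emerges.
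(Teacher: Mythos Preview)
Your approach is genuinely different from the paper's (the paper argues via the Euler--Lagrange equation and shows the auxiliary function $F$ from Proposition~\ref{Multi} is strictly monotone when $\beta_2\le 2$, forcing $g(x)=\int h(x,y)\,dy$ to be constant), and your direct variational idea is attractive, but as written it contains a factor-of-two slip that breaks the argument.

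The slip is in the entropy expansion.  One has
\[
I\!\left(\tfrac12+f\right)+\log 2=\sum_{m\ge 1}\frac{(2f)^{2m}}{2m(2m-1)},
\]
so
\[
\tfrac12\iint I\!\left(\tfrac12+f\right)=-\tfrac12\log 2+\tfrac12\iint\sum_{m\ge 1}\frac{(2f)^{2m}}{2m(2m-1)},
\]
not the expression you wrote (your version drops the leading $\tfrac12$).  After correcting this, the inequality you need is
\[
\beta_2\int_0^1\Bigl(\int_0^1 f\,dy\Bigr)^2dx\ \le\ \tfrac12\iint\sum_{m\ge 1}\frac{(2f)^{2m}}{2m(2m-1)}.
\]
Keeping only the $m=1$ term now gives a right side of $\iint f^2$, and the plain Jensen/Cauchy--Schwarz bound $\int(\int f\,dy)^2dx\le\iint f^2$ that you invoke only yields $\beta_2\le 1$, not $\beta_2\le 2$.

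The fix is that symmetry buys you an extra factor of two.  For symmetric $f$ with $\iint f=0$, write $g(x)=\int f(x,y)\,dy$ and decompose $f(x,y)=g(x)+g(y)+r(x,y)$; then $\int r(x,y)\,dy=0$ for every $x$, the cross terms vanish, and
\[
\iint f^2=2\int g^2+\iint r^2\ \ge\ 2\int g^2.
\]
This sharper inequality $\int_0^1\bigl(\int_0^1 f\,dy\bigr)^2dx\le\tfrac12\iint f^2$ restores the threshold $\beta_2\le 2$, and the equality analysis goes through (equality forces $r\equiv 0$ and then the higher-order entropy terms force $f\equiv 0$).  So your route can be completed, but the ``two-line Cauchy--Schwarz'' step as you stated it is not the right inequality; the symmetry of the graphon is essential here, not incidental.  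The paper's Euler--Lagrange argument sidesteps this by working with critical points directly and using $\tfrac{4x}{(1+x)^2}\le 1$ to bound $F'(z)\ge 1-\beta_2/2$; your corrected argument has the advantage of being a global upper bound rather than a first-order one.
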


\begin{proof}
First, by Proposition \ref{TwoStar}, for any $\beta_{2}>\frac{1}{2\epsilon(1-\epsilon)}=\frac{1}{2\frac{1}{2}(1-\frac{1}{2})}=2$,
the optimizer is not uniform.
Next, let us prove that it is uniform if $\beta_{2}\leq 2$.
Let us recall from the proof of Proposition \ref{Multi} that for optimal $h$,
the values of $g(x)=\int_{0}^{1}h(x,y)dy$ are the roots of
\begin{equation}
F(z)=z-\int_{0}^{1}\frac{dy}{1+e^{2\beta_{1}-2\beta_{2}z-2\beta_{2}g(y)}}.
\end{equation}
Differentiating with respect to $z$, we get
\begin{equation}
F'(z)=1-\int_{0}^{1}\frac{2\beta_{2}e^{2\beta_{1}-2\beta_{2}z-2\beta_{2}g(y)}dy}
{(1+e^{2\beta_{1}-2\beta_{2}z-2\beta_{2}g(y)})^{2}}.
\end{equation}
It is clear $F'(z)\geq 1>0$ if $\beta_{2}\leq 0$. 
Now, if $\beta_{2}>0$, 
since $\frac{4x}{(1+x)^{2}}\leq 1$ for any $x>0$, we have
\begin{equation}
F'(z)\geq 1-\frac{\beta_{2}}{2}\geq 0,
\end{equation}
if $\beta_{2}\leq 2$. Suppose $F'(z)=0$, then
the equality holds and we must have $2\beta_{1}-2\beta_{2}z-2\beta_{2}g(y)=0$
for a.e. $y$. Since $\beta_{2}>0$, $g(y)$ is a constant a.e. and so is $h(x,y)$.
Otherwise, we have $F'(z)>0$. When $F$ is strictly increasing, 
$g(x)=\int_{0}^{1}h(x,y)dy$ takes only one value, which is $1/2$ and so 
is $h(x,y)$ for any $x,y$. Thus when $\beta_{2}\leq 2$, the optimizer is uniform.
\end{proof}

\begin{remark}
For general $p$-star model, $p\geq 2$, we can compute that for $\beta_{2}>0$,
\begin{equation}
F'(z)=1-p(p-1)\beta_{2}z^{p-2}\int_{0}^{1}\frac{e^{2\beta_{1}-p\beta_{2}z^{p-1}-p\beta_{2}g^{p-1}(y)}dy}
{(1+e^{2\beta_{1}-p\beta_{2}z^{p-1}-p\beta_{2}g^{p-1}(y)})^{2}}
\leq 1-\frac{p(p-1)}{4}\beta_{2}.
\end{equation}
Thus $F'(z)\geq 0$ if $\beta_{2}\leq\frac{4}{p(p-1)}$. Similarly to the arguments in Proposition \ref{OneHalfLine}, 
we conclude that the optimizing graphon is uniform if $\beta_{2}\leq\frac{4}{p(p-1)}$. Recall
that we already proved that the optimizing graphon is uniform if $(\epsilon,\frac{\beta_{2}}{2})$ is outside
of the $U$-shaped region and in particular when $\epsilon=\frac{p-1}{p}$, the optimizing graphon
is uniform if $\beta_{2}\leq\frac{1}{2}\frac{p^{p-1}}{(p-1)^{p}}$. It is easy to check that
$\frac{4}{p(p-1)}>\frac{1}{2}\frac{p^{p-1}}{(p-1)^{p}}$ and thus provides a better bound
for $p\leq 3$.
\end{remark}

\begin{proposition}
For the two-star model, along the line $\epsilon=1/2$, if $h$ is an optimizer in \eqref{MainEqn} then so is $1-h$.
\end{proposition}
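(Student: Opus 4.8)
The plan is to exploit the symmetry $t\mapsto 1-t$ of the rate function $I$, using the fact that the prescribed edge density $\epsilon=1/2$ is the fixed point of this symmetry. Let $h$ be an optimizer for $\psi(1/2,\beta_{2})$ in \eqref{MainEqn} and set $\tilde h:=1-h$. First I would verify that $\tilde h$ is admissible: it is symmetric because $h$ is, it takes values in $[0,1]$ because $h$ does, and
\begin{equation}
\iint_{[0,1]^{2}}\tilde h(x,y)\,dx\,dy=1-\iint_{[0,1]^{2}}h(x,y)\,dx\,dy=1-\frac{1}{2}=\frac{1}{2},
\end{equation}
so it satisfies the edge-density constraint with $\epsilon=1/2$.

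Next I would show that the objective functional takes the same value at $\tilde h$ as at $h$. The entropy term is immediate: since $I(t)=t\log t+(1-t)\log(1-t)$ satisfies $I(t)=I(1-t)$, we get $\iint_{[0,1]^{2}}I(\tilde h(x,y))\,dx\,dy=\iint_{[0,1]^{2}}I(h(x,y))\,dx\,dy$. For the two-star term, write $g(x):=\int_{0}^{1}h(x,y)\,dy$, so that $\tilde g(x):=\int_{0}^{1}\tilde h(x,y)\,dy=1-g(x)$, and expand
\begin{equation}
\int_{0}^{1}\tilde g(x)^{2}\,dx=\int_{0}^{1}(1-g(x))^{2}\,dx=1-2\int_{0}^{1}g(x)\,dx+\int_{0}^{1}g(x)^{2}\,dx.
\end{equation}
Since $\int_{0}^{1}g(x)\,dx=\iint_{[0,1]^{2}}h(x,y)\,dx\,dy=\frac{1}{2}$, the first two terms cancel and $\int_{0}^{1}\tilde g(x)^{2}\,dx=\int_{0}^{1}g(x)^{2}\,dx$. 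Therefore $\beta_{2}\int_{0}^{1}\tilde g^{2}-\frac{1}{2}\iint I(\tilde h)=\beta_{2}\int_{0}^{1}g^{2}-\frac{1}{2}\iint I(h)=\psi(1/2,\beta_{2})$, so $\tilde h=1-h$ is also an optimizer.

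There is no genuine obstacle here; this is a symmetry argument rather than an estimate. The only point worth emphasizing is where $\epsilon=1/2$ is used: it is precisely the identity $1-2\int_{0}^{1}g(x)\,dx=0$ that makes the linear cross term in the expansion of $(1-g)^{2}$ vanish. For $\epsilon\neq 1/2$ one instead gets $\int_{0}^{1}\tilde g^{2}=(1-2\epsilon)+\int_{0}^{1}g^{2}$, so the two-star term is not preserved and the statement genuinely fails off the line $\epsilon=1/2$.
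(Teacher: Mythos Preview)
Your proof is correct and follows essentially the same approach as the paper: use the symmetry $I(t)=I(1-t)$ for the entropy term, and expand $(1-g)^{2}$ to see that the two-star term is preserved because $\int_{0}^{1}g=\tfrac{1}{2}$. Your additional verification of admissibility and your remark on why $\epsilon=\tfrac{1}{2}$ is essential are nice touches not spelled out in the paper.
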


\begin{proof}
It is easy to check that $I(x)=I(1-x)$, $0\leq x\leq 1$ and moreover
\begin{align}
&\int_{0}^{1}\left(\int_{0}^{1}(1-h(x,y))dy\right)^{2}dx
\\
&=\int_{0}^{1}\left(\int_{0}^{1}h(x,y)dy\right)^{2}dx+1-2\int_{0}^{1}\int_{0}^{1}h(x,y)dxdy
\nonumber
\\
&=\int_{0}^{1}\left(\int_{0}^{1}h(x,y)dy\right)^{2}dx
\nonumber
\end{align}
if $\iint_{[0,1]^{2}}h(x,y)dxdy=\frac{1}{2}$. Therefore, if $h$ is an optimizer for the two-star model, so is $1-h$.
\end{proof}

\begin{proposition}\label{Local}
For the two-star model, along the line $\epsilon=1/2$, $\beta_{2}>2$, the graphon
\begin{equation}\label{hDefn}
h(x,y)=
\begin{cases}
\frac{1}{2}+\delta(\beta_{2}) &\text{if $0<x,y<\frac{1}{2}$}
\\
\frac{1}{2}-\delta(\beta_{2}) &\text{if $\frac{1}{2}<x,y<1$}
\\
\frac{1}{2} &\text{otherwise}
\end{cases}
\end{equation}
is a stationary point in \eqref{MainEqn}, where $\delta(\beta_{2})$ is the unique solution to
the equation $\log\left(\frac{\frac{1}{2}+\delta}{\frac{1}{2}-\delta}\right)-2\beta_{2}\delta=0$
on the interval $(0,\frac{1}{2})$.
\end{proposition}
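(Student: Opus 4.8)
The plan is to verify that the proposed $h$ satisfies the Euler--Lagrange (stationarity) equation derived in the proof of Proposition~\ref{Multi}, for an appropriate value of the Lagrange multiplier $\beta_1$. First I would check that $h$ lies in the constraint set: the two diagonal blocks $\{0<x,y<\frac12\}$ and $\{\frac12<x,y<1\}$ each have area $\frac14$ and carry values $\frac12+\delta$ and $\frac12-\delta$, while the off-diagonal region has area $\frac12$ and carries value $\frac12$, so $\iint_{[0,1]^2}h\,dx\,dy=\frac14(\frac12+\delta)+\frac14(\frac12-\delta)+\frac12\cdot\frac12=\frac12$. Next I would compute $g(x)=\int_0^1 h(x,y)\,dy$ by a one-line integration, obtaining $g(x)=\frac12+\frac{\delta}{2}$ for $x\in(0,\frac12)$ and $g(x)=\frac12-\frac{\delta}{2}$ for $x\in(\frac12,1)$.

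Then I would substitute this $g$ into the $p=2$ stationarity relation from Proposition~\ref{Multi},
\begin{equation*}
2\beta_1-2\beta_2 g(x)-2\beta_2 g(y)=\log\!\left(\frac{1-h(x,y)}{h(x,y)}\right),
\end{equation*}
and examine the three regions. On the off-diagonal region $g(x)+g(y)=1$ and $h=\frac12$, so the relation reduces to $2\beta_1-2\beta_2=0$, i.e.\ $\beta_1=\beta_2$. On the upper-left block $g(x)+g(y)=1+\delta$ and $h=\frac12+\delta$, so with $\beta_1=\beta_2$ the relation becomes $-2\beta_2\delta=-\log\!\big(\frac{1/2+\delta}{1/2-\delta}\big)$; on the lower-right block $g(x)+g(y)=1-\delta$ and $h=\frac12-\delta$, and it becomes $2\beta_2\delta=\log\!\big(\frac{1/2+\delta}{1/2-\delta}\big)$. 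Both collapse to the single equation $\log\!\big(\frac{1/2+\delta}{1/2-\delta}\big)-2\beta_2\delta=0$, which is exactly the defining equation for $\delta(\beta_2)$. Hence, with $\beta_1=\beta_2$ and $\delta=\delta(\beta_2)$, the Euler--Lagrange equation holds for a.e.\ $(x,y)$, so $h$ is a stationary point in \eqref{MainEqn}.

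Finally I would check that $\delta(\beta_2)$ is well defined for $\beta_2>2$. Setting $\phi(\delta)=\log\!\big(\frac{1/2+\delta}{1/2-\delta}\big)-2\beta_2\delta$ on $[0,\frac12)$, one has $\phi(0)=0$ and $\phi'(\delta)=\frac{1}{1/4-\delta^2}-2\beta_2$, which is strictly increasing in $\delta$, equals $4-2\beta_2<0$ at $\delta=0$, and tends to $+\infty$ as $\delta\uparrow\frac12$; therefore $\phi$ first strictly decreases from $0$ and then strictly increases to $+\infty$, so it has a unique zero in $(0,\frac12)$. The argument is essentially computational, and the only mildly delicate point is that the stationarity relation splits into three equations (one per region) while there are only two free quantities $\beta_1$ and $\delta$, making the system look overdetermined; the substance is the observation that the off-diagonal region pins $\beta_1=\beta_2$ and then \emph{both} diagonal relations reduce to the \emph{same} transcendental equation in $\delta$, namely the one in the statement — so there is no real obstacle, just this consistency check.
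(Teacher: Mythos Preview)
Your proposal is correct and follows essentially the same approach as the paper: both verify the Euler--Lagrange equation from Proposition~\ref{Multi} region by region and then establish existence/uniqueness of $\delta(\beta_2)$ by analyzing the same auxiliary function on $(0,\tfrac12)$. The only cosmetic difference is that you explicitly solve for the Lagrange multiplier $\beta_1=\beta_2$ from the off-diagonal region, whereas the paper simply plugs in $\beta_1=\beta_2$ from the outset; your presentation is arguably cleaner on this point.
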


\begin{proof}
Let us consider the graphon
\begin{equation}
h(x,y)=
\begin{cases}
\frac{1}{2}+\delta &\text{if $0<x,y<\frac{1}{2}$}
\\
\frac{1}{2}-\delta &\text{if $\frac{1}{2}<x,y<1$}
\\
\frac{1}{2} &\text{otherwise}
\end{cases},
\end{equation}
where $0\leq\delta<\frac{1}{2}$ is a parameter to be determined later.
It is easy to check that $\iint_{[0,1]^{2}}h(x,y)dxdy=\frac{1}{2}$ and
\begin{equation}
g(x):=\int_{0}^{1}h(x,y)dy=
\begin{cases}
\frac{1}{2}+\frac{\delta}{2} &\text{if $0<x<\frac{1}{2}$}
\\
\frac{1}{2}-\frac{\delta}{2} &\text{if $\frac{1}{2}<x<1$}
\end{cases}.
\end{equation}
Therefore, we have
\begin{equation}
2\beta_{2}-2\beta_{2}g(x)-2\beta_{2}g(y)=\log\left(\frac{1-h(x,y)}{h(x,y)}\right),
\qquad
0<x,y<1,
\end{equation}
if we let $2\beta_{2}\delta=\log\left(\frac{\frac{1}{2}+\delta}{\frac{1}{2}-\delta}\right)$.
Hence, the graphon satisfies the Euler-Lagrange equation and is therefore a stationary point.
For any $\beta_{2}>2$, let us define
\begin{equation}
G(\delta):=\log\left(\frac{\frac{1}{2}+\delta}{\frac{1}{2}-\delta}\right)-2\beta_{2}\delta.
\end{equation}
Then, $G(0)=0$, $\lim_{\delta\uparrow\frac{1}{2}}G(\delta)=+\infty$ and
\begin{equation}
G'(\delta)=\frac{1}{\frac{1}{2}-\delta}+\frac{1}{\frac{1}{2}+\delta}-2\beta_{2},
\qquad
G''(\delta)=\frac{1}{(\frac{1}{2}-\delta)^{2}}-\frac{1}{(\frac{1}{2}+\delta)^{2}}.
\end{equation}
Thus, $G''(\delta)>0$ for any $0<\delta<\frac{1}{2}$ and $G'(0)<0$ since $\beta_{2}>2$. Therefore,
$G(\delta)=0$ has a unique solution on $(0,\frac{1}{2})$.
\end{proof}

\begin{remark}
It would be interesting to know if the $h$ defined in \eqref{hDefn} is indeed the optimizer. 
That does not seem to be the case. Indeed, one can show that
the $h$ defined in \eqref{hDefn} in Proposition \ref{Local} is a saddle point at least for $\beta_{2}>4$.
Up to second variation, $\iint_{[0,1]^{2}}\delta h(x,y)dxdy=0$
and $\delta h(x,y)=\delta h(y,x)$,
\begin{align}\label{UpperBoundI}
\delta\psi
&=\beta_{2}\int_{0}^{1}\left(\int_{0}^{1}h(x,y)+\delta h(x,y)dy\right)^{2}dx
-\beta_{2}\int_{0}^{1}\left(\int_{0}^{1}h(x,y)dy\right)^{2}dx
\\
&\qquad\qquad\qquad\qquad
-\frac{1}{2}\iint_{[0,1]^{2}}[I(h+\delta h)-I(h)]dxdy
\nonumber
\\
&=2\beta_{2}\int_{0}^{1}\left(\int_{0}^{1}h(x,y)dy\right)\left(\int_{0}^{1}\delta h(x,y)dy\right)dx
+\beta_{2}\int_{0}^{1}\left(\int_{0}^{1}\delta h(x,y)dy\right)^{2}dx
\nonumber
\\
&\qquad\qquad
-\frac{1}{2}\iint_{[0,1]^{2}}I'(h)\delta hdxdy
-\frac{1}{4}\iint_{[0,1]^{2}}I''(h)(\delta h)^{2}dxdy
\nonumber
\\
&=
\beta_{2}\int_{0}^{1}\left(\int_{0}^{1}\delta h(x,y)dy\right)^{2}dx
-\frac{1}{4}\iint_{[0,1]^{2}}I''(h)(\delta h)^{2}dxdy
\nonumber
\end{align}

Moreover, observe that
\begin{equation}\label{ISecond}
I''(h)=
\begin{cases}
\frac{1}{\frac{1}{2}-\delta}+\frac{1}{\frac{1}{2}+\delta} &\text{if $0<x,y<\frac{1}{2}$ or $\frac{1}{2}<x,y<1$}
\\
4 &\text{otherwise}
\end{cases}.
\end{equation}

Therefore, by \eqref{UpperBoundI} and \eqref{ISecond}, we have
\begin{align}\label{UpperBoundIV}
\delta\psi
&=\beta_{2}\int_{0}^{1}\left(\int_{0}^{1}\delta h(x,y)dy\right)^{2}dx
\\
&\qquad\qquad
-\frac{1}{1-4\delta^{2}}\iint_{R_{1}}(\delta h(x,y))^{2}dxdy
-\iint_{R_{2}}(\delta h(x,y))^{2}dxdy
\nonumber
\end{align}
where
\begin{equation}
R_{1}:=\left\{(x,y):0<x,y<\frac{1}{2}\right\}\bigcup\left\{(x,y):\frac{1}{2}<x,y<1\right\},
\quad
R_{2}:=[0,1]^{2}\backslash R_{1}.
\end{equation}

Consider $\delta h(x,y)$ defined as $\delta h(x,y)=\epsilon$ if $(x,y)\in R_{1}$
and $\delta h(x,y)=-\epsilon$ if $(x,y)\in R_{2}$. Then, $\iint_{[0,1]^{2}}\delta h(x,y)dxdy=0$ and we can compute that
\begin{equation}
\delta\psi=-\frac{1}{2}\left(\frac{1}{1-4\delta^{2}}+1\right)\epsilon^{2}.
\end{equation}
On the other hand, consider $\delta h(x,y)$ defined as
\begin{equation}
\delta h(x,y)=
\begin{cases}
\epsilon &\text{if $0<x<\frac{1}{2}$,$\frac{3}{4}<y<1$ or $\frac{3}{4}<x<1$, $0<y<\frac{1}{2}$}
\\
-\epsilon &\text{if $0<x<\frac{1}{2}$,$\frac{1}{2}<y<\frac{3}{4}$ or $\frac{1}{2}<x<\frac{3}{4}$, $0<y<\frac{1}{2}$}
\\
0 &\text{otherwise}
\end{cases}.
\end{equation}
Then, $\iint_{[0,1]^{2}}\delta h(x,y)dxdy=0$ and we can compute that
\begin{equation}
\delta\psi=\frac{\beta_{2}}{2}\frac{\epsilon^{2}}{4}-\frac{\epsilon^{2}}{2}.
\end{equation}
Hence, for $\beta_{2}>4$, $h$ defined in \eqref{hDefn} is a saddle point.
\end{remark}

\subsection{Phase Transition}

In Proposition \ref{VE}, we showed that for a general simple subgraph $H$ satisfying
the condition $e(H)>v(H)/2$, when
\begin{equation}
\beta_{2}>\frac{-\frac{1}{2}[\epsilon\log\epsilon+(1-\epsilon)\log(1-\epsilon)]}{\epsilon^{v(H)/2}-\epsilon^{e(H)}},
\end{equation}
the optimizing graphon is not uniform. On the other hand, by Proposition \ref{PositiveUniform},
for $\beta_{2}\geq 0$, there exists a $U$-shaped region outside of which the optimizing graph
is uniform. Therefore, fix the edge density $\epsilon$, if we view $\psi(\epsilon,\beta_{2})$ as a function
of $\beta_{2}$, it is constant in $\beta_{2}$ on a non-trivial interval. By complex analysis, if $\psi(\epsilon,\beta_{2})$
were analytic in $\beta_{2}$, then it would be constant everywhere. Hence, for any fixed edge density $\epsilon$, 
there exists a positive $\beta_{2}$ at which we have non-analyticity.
It is also worth mentioning that the non-analyticity in positive $\beta_{2}$ may
be alternatively derived using Theorem 1.1. in \cite{RadinIV}. This is also briefly mentioned in \cite{KenyonYin},
where the non-analyticity in negative $\beta_{2}$ is proved.

\section{Summary and Open Questions}\label{SummarySection}

We have studied the constrained exponential random graph models introduced by Kenyon and Yin \cite{KenyonYin}. We showed
uniform and non-uniform structure for very general underlying graph $H$. 
For $\beta_{2}$ close to zero, either in the attractive regime or repulsive regime, 
the optimal graphon will be uniform and for $\beta_{2}$ sufficiently large, the optimal graphon
will be non-uniform. It remains open to find the exact optimizing graphon structure for
finite large $\beta_{2}$. It is worth mentioning that similar phenomena have been observed
for the micro-canonical ensembles in Radin and Sadun \cite{RadinIV}. They showed
how the entropy changes when it is close to the so-called Erd\H{o}s-Reny\'{i} density. That can give an alternative approach
to giving some estimates on when the asymptotic structure for the canonical ensembles is uniform that was
considered in this paper.

More results are obtained when $H$ is a $p$-star. In the case when $H$ is a two-star,
we can show that along the line $\epsilon=1/2$, the asymptotic structure is uniform
if $\beta_{2}\leq 2$ and is non-uniform if $\beta_{2}>2$. For general $H$, we do not have a sharp result.
This remains the major challenging open problem for future investigations.
Even if we cannot get a sharp result for general $H$, is it possible to show a sharp transition
for a concrete model, e.g. edge-triangle model along the line $\epsilon=1/2$?

We also found and proved a stationary point for the two-star model and it remains an open question
if it is indeed a local/global optimizer. Similar results should hold for the corresponding
micro-canonical model. 
When $H$ is a $p$-star, we showed
that the optimizing graphon must be multipodal. The numerical results for the corresponding
micro-canonical model suggest that the optimizing graphons should indeed be bipodal, see Kenyon et al \cite{Kenyon}.
The same conjecture can be said in our case.

\section*{Acknowledgements}

The author is very grateful to two anonymous referees and the editor
for helpful comments and suggestions. The author also thanks David Aristoff for helpful discussions.
The author is partially supported by NSF Grant DMS-1613164.

\end{document}